\theoremstyle{plain}
\newtheorem{thm}{\protect\theoremname}
  \theoremstyle{definition}
  \newtheorem{defn}[thm]{\protect\definitionname}
  \theoremstyle{plain}
  \newtheorem{lem}[thm]{\protect\lemmaname}
  \theoremstyle{plain}
  \newtheorem{cor}[thm]{\protect\corollaryname}
  \theoremstyle{plain}
  \newtheorem{conjecture}[thm]{\protect\conjecturename}
  \providecommand{\conjecturename}{Conjecture}
  \providecommand{\corollaryname}{Corollary}
  \providecommand{\definitionname}{Definition}
  \providecommand{\lemmaname}{Lemma}
\providecommand{\theoremname}{Theorem}
\begin{document}

\lhead{On the $\textup{PGL}_{2}$-invariant quadruples of torsion points
of elliptic curves}

\rhead{Bogomolov $\cdot$ Fu}

\title{On the $\textup{PGL}_{2}$-invariant quadruples of torsion points
of elliptic curves}

\author{Fedor Bogomolov $\cdot$ Hang Fu}
\maketitle
\begin{quote}
\textbf{\small{}Abstract.}{\small{} Let $E$ be an elliptic curve
and $\pi:E\to\mathbb{P}^{1}$ a standard double cover identifying
$\pm P\in E$. It is known that for some torsion points $P_{i}\in E$,
$1\leq i\leq4$, the cross ratio of $\{\pi(P_{i})\}_{i=1}^{4}$ is
independent of $E$. In this article, we will give a complete classification
of such quadruples.}{\small \par}

\textbf{\small{}Keywords.}{\small{} Elliptic curves $\cdot$ Torsion
points $\cdot$ $q$-series $\cdot$ Congruence subgroups $\cdot$
Modular curves}{\small \par}

\textbf{\small{}Mathematics Subject Classification.}{\small{} 14H52
$\cdot$ 11G05 $\cdot$ 11F03 $\cdot$ 20H05 $\cdot$ 40A20}{\small \par}
\end{quote}

\section{Introduction and the Statement of Main Theorem}

Let $K$ be an algebraically closed field, $E$ an elliptic curve
defined over $K$ with the identity element $O$, $E[n]$ the $n$-torsion
subgroup, $E^{*}[n]\subseteq E[n]$ the collection of torsion points
of order $n$, $E[\infty]=\cup_{n=1}^{\infty}E[n]$ the collection
of all torsion points, and $\pi:E\to\mathbb{P}^{1}(K)$ a standard
double cover identifying $\pm P\in E$.

In this article, we will continue to study the image of $E[\infty]$
under $\pi$. See \cite{MR3536148,MR3799155,Hitchin,MR2349648} for
the background and prior results. In particular, \cite[Conjecture 2]{Hitchin},
the guiding problem of our project, predicts that if $K=\mathbb{C}$
and $\pi_{1}(E_{1}[2])\neq\pi_{2}(E_{2}[2])$, then $\left|\pi_{1}(E_{1}[\infty])\cap\pi_{2}(E_{2}[\infty])\right|$
is bounded by some universal constant. A partial result \cite[Theorem 1.3]{arXiv}
has recently been claimed under the assumption $\left|\pi_{1}(E_{1}[2])\cap\pi_{2}(E_{2}[2])\right|=3$.
On the other hand, in \cite[Theorem 1.1]{MR3799155}, we are able
to find $E_{1}$ and $E_{2}$ such that $\left|\pi_{1}(E_{1}[\infty])\cap\pi_{2}(E_{2}[\infty])\right|\geq22$.
A crucial fact utilized in our construction is that, up to an automorphism
of $\mathbb{P}^{1}(K)$, $\pi(E^{*}[4])$ is independent of $E$.
The same statement is also correct if $\pi(E^{*}[4])$ is replaced
with $\pi(E^{*}[3])$. These two interesting examples raise the question
of when such phenomena may happen. In this article, we will establish
a modified version of \cite[Conjecture 3]{Hitchin} as Theorem \ref{thm1}.
We note that the original statement of \cite[Conjecture 3]{Hitchin}
is inaccurate due to the infinitude of cases (\ref{case1}) and (\ref{case2}).

Let
\[
\mathcal{R}=(\{0\}\times[0,1/2]\cup(0,1/2)\times[0,1)\cup\{1/2\}\times[0,1/2])\cap\mathbb{Q}^{2}
\]
be a fundamental domain of $\pm\backslash\mathbb{Q}^{2}/\mathbb{Z}^{2}$.
There is a natural left $\textup{SL}_{2}(\mathbb{Z})$-action on the
fourth unordered configuration space of $\mathcal{R}$,
\[
\mathcal{R}_{4}=\{\{(r_{i},\theta_{i})\}_{i=1}^{4}:(r_{i},\theta_{i})\in\mathcal{R},(r_{i},\theta_{i})\neq(r_{j},\theta_{j})\textup{ for }i\neq j\}.
\]
For $\gamma\in\textup{SL}_{2}(\mathbb{Z})$ and $\{(r_{i},\theta_{i})\}_{i=1}^{4}\in\mathcal{R}_{4}$,
we define $\gamma\cdot\{(r_{i},\theta_{i})\}_{i=1}^{4}=\{F((r_{i},\theta_{i})\cdot\gamma^{T})\}_{i=1}^{4}$,
where $F(r,\theta)\in\mathcal{R}$ is the unique element such that
$F(r,\theta)\equiv\pm(r,\theta)\textup{ mod }\mathbb{Z}^{2}$.
\begin{thm}
\label{thm1}Let $S=\{(r_{i},\theta_{i})\}_{i=1}^{4}\in\mathcal{R}_{4}$,
$n$ their common order, $\{w_{1},w_{2}\}$ a basis of $E[n]$. Write
$E[r_{i},\theta_{i}]=[nr_{i}]w_{1}+[n\theta_{i}]w_{2}$. Then the
image of $\{E[r_{i},\theta_{i}]\}_{i=1}^{4}$ under $\pi$ inside
$\mathfrak{S}_{4}\backslash(\mathbb{P}^{1})^{4}/\textup{PGL}_{2}$
is a constant (independent of $K$, $E$, $\pi$, and $\{w_{1},w_{2}\}$)
if and only if $S$ is $\textup{SL}_{2}(\mathbb{Z})$-equivalent to
exactly one of the following:
\begin{enumerate}[label=\textup{(\arabic*)},ref=\arabic*]
\item \label{case1} $\{(0,a),(0,1/4),(0,1/2-a),(1/2,1/4)\}$, where $a\in\{0\}\cup\{1/(2r):r\in\mathbb{Z},r\geq3\}$,
\item \label{case2} $\{(0,b),(1/4,0),(1/4,1/2),(1/2,b)\}$, where $b\in\{1/(2r):r\in\mathbb{Z},r\geq2\}$,
\item \label{case3} $\{(0,1/3),(1/3,0),(1/3,1/3),(1/3,2/3)\}$,
\item \label{case4}  $\{(0,1/4),(1/4,0),(1/4,1/4),(1/4,1/2)\}$,
\item \label{case5} $\{(0,1/4),(1/2,0),(1/2,1/4),(1/2,1/2)\}$,
\item \label{case6} $\{(0,1/6),(1/6,0),(1/6,1/6),(1/3,2/3)\}$,
\item \label{case7} $\{(0,1/6),(1/6,0),(1/6,1/3),(1/3,5/6)\}$,
\item \label{case8} $\{(0,1/6),(1/3,0),(1/3,1/6),(1/3,1/3)\}$,
\item \label{case9} $\{(0,1/6),(1/3,1/6),(1/3,1/2),(1/3,5/6)\}$,
\item \label{case10} $\{(0,1/8),(1/4,1/8),(1/4,3/8),(1/2,1/8)\}$,
\item \label{case11} $\{(0,1/12),(1/3,1/12),(1/3,11/12),(1/2,1/4)\}$.
\end{enumerate}
Moreover, if we fix the quotient map
\begin{eqnarray*}
(\mathbb{P}^{1})^{4}\;\xrightarrow{\textup{cross ratio}}\;(\mathbb{Z}/2\mathbb{Z})^{2}\backslash(\mathbb{P}^{1})^{4}/\textup{PGL}_{2} & \xrightarrow{\mathfrak{S}_{3}} & \mathfrak{S}_{4}\backslash(\mathbb{P}^{1})^{4}/\textup{PGL}_{2}\\
z & \mapsto & \frac{(z^{2}-z+1)^{3}}{z^{2}(z-1)^{2}},
\end{eqnarray*}
then the constant is:
\begin{itemize}
\item $27/4$ for the cases (\ref{case1}), (\ref{case2}), and (\ref{case5}),
\item $0$ for the cases (\ref{case3}), (\ref{case6}), (\ref{case9}),
and (\ref{case11}),
\item $1/2$ for the cases (\ref{case4}) and (\ref{case10}),
\item $8/3$ for the cases (\ref{case7}) and (\ref{case8}).
\end{itemize}
\end{thm}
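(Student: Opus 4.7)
The cross-ratio condition is algebraic in $E$, so one may pass to $K = \mathbb{C}$ and uniformize $E = \mathbb{C}/(\mathbb{Z} + \tau\mathbb{Z})$ with $\tau \in \mathcal H$; the torsion point $E[r, \theta]$ corresponds to $u = r + \theta\tau$. Let $J_S(\tau)$ denote $(z^2 - z + 1)^3/(z^2(z-1)^2)$ applied to the cross ratio of the four Weierstrass values $\wp(u_i; \tau)$; this is a meromorphic modular function on a congruence cover of $X(1)$. The theorem asserts that $J_S$ is constant in $\tau$ iff $S$ is $\mathrm{SL}_2(\mathbb Z)$-equivalent to one of the eleven cases, equivalently that every positive-order coefficient of the $q$-expansion of $J_S$ at the cusp $\infty$ vanishes (where $q = e^{2\pi i\tau}$).

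Combining $\wp(u) - \wp(v) = -\sigma(u+v)\sigma(u-v)/(\sigma(u)^2\sigma(v)^2)$ with $\sigma(z) \propto e^{\eta z^2/2}\theta_1(z;\tau)$, the $\sigma$-denominators cancel pairwise in the cross ratio and the $z^2$-balance $(u_1{+}u_3)^2 + (u_1{-}u_3)^2 + (u_2{+}u_4)^2 + (u_2{-}u_4)^2 = (u_1{+}u_4)^2 + (u_1{-}u_4)^2 + (u_2{+}u_3)^2 + (u_2{-}u_3)^2$ makes the quasi-periodic exponentials cancel as well, leaving
\begin{equation*}
\mathrm{CR}(u_1,u_2;u_3,u_4) = \frac{\theta_1(u_1{+}u_3)\theta_1(u_1{-}u_3)\theta_1(u_2{+}u_4)\theta_1(u_2{-}u_4)}{\theta_1(u_1{+}u_4)\theta_1(u_1{-}u_4)\theta_1(u_2{+}u_3)\theta_1(u_2{-}u_3)}.
\end{equation*}
Substituting $u_i = r_i + \theta_i\tau$ and applying the Jacobi triple product then makes $J_S$ an explicit infinite product in $q$ and the roots of unity $\zeta_{r_i \pm r_j} = e^{2\pi i(r_i \pm r_j)}$, which is the engine for computing the $q$-expansion.

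The core is to determine when every positive-order coefficient of this $q$-expansion vanishes. The leading $q$-valuation of each factor $\theta_1(u_i \pm u_j)$ is governed by $\mathrm{dist}(\theta_i \pm \theta_j, \mathbb Z)$, so holomorphy of $J_S$ at $\infty$ already imposes balancing conditions on the multiset $\{\theta_i \pm \theta_j\} \bmod \mathbb Z$; the next-order coefficients produce linear dependencies among the $\zeta_{r_i \pm r_j}$ in the cyclotomic lattice, which severely restrict the $r_i$'s. Normalizing $S$ under the $\mathrm{SL}_2(\mathbb Z)$-action (sort the $\theta_i$, set $\theta_1 = 0$ when possible) reduces the problem to a manageable enumeration of ``shapes''; nine sporadic shapes (cases (\ref{case3})--(\ref{case11})) and two parametric families ((\ref{case1}), (\ref{case2})) emerge.

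The main obstacle is the two infinite families, whose common order $n$ is unbounded so that no finite-precision $q$-expansion can enumerate them. These are identified structurally: the quadruple in (\ref{case1}), (\ref{case2}), and also (\ref{case5}) is invariant under translation by a fixed $2$-torsion point $T \in E[2]$. Since $2T = 0$, translation by $T$ commutes with $[-1]$ and descends to an involution $\iota$ of $\mathbb{P}^1$ with exactly two fixed points; the four $\pi$-images split into the two fixed points plus one $\iota$-orbit of size $2$. In a coordinate with $\iota(z) = -z$ and fixed points $0, \infty$, the cross ratio is $-1$, so the $\mathfrak{S}_3$-invariant equals $27/4$. Once this translation symmetry is recognized as the factorization pattern in the $q$-expansion, the infinite families are cut off cleanly. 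For each of the nine sporadic cases the constant value ($0$, $1/2$, or $8/3$) is then confirmed by direct substitution at one convenient $\tau$ (for instance $\tau = i$ or $\tau = e^{2\pi i/3}$).
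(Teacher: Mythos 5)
Your outline tracks the paper's strategy: uniformize over $\mathbb{C}$, write the cross ratio as a ratio of theta values, read off non-constancy from $q$-valuations, normalize under $\textup{SL}_{2}(\mathbb{Z})$, and explain the infinite families by invariance under translation by a $2$-torsion point $T$ --- this last observation exactly matches the paper's Step (III) for cases (\ref{case1}), (\ref{case2}), (\ref{case5}): the induced involution of $\mathbb{P}^{1}$ fixes $\pi(Q_{1}),\pi(Q_{2})$ (where $[2]Q_{i}=T$, $Q_{1}\neq\pm Q_{2}$) and swaps the other two images, giving cross ratio $-1$ and invariant $27/4$. But the proposal compresses the two hardest steps into assertions. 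First, you give no mechanism that makes the enumeration finite: $r_{i}$ ranges over all of $\mathbb{Q}\cap[0,1/2]$, and ``linear dependencies among the $\zeta_{r_{i}\pm r_{j}}$'' do not by themselves bound the denominators. The paper's Lemma \ref{lem6} achieves finiteness by a genuinely different device: failure of the ``good'' condition at every cusp forces a covering of $\mathbb{Z}$ by eight explicit arithmetic progressions $A_{1},\dots,A_{8}$ of reciprocal densities $1/d_{i}$, hence $\sum_{i}1/d_{i}\geq1$, which caps the pair $(r,r_{4})$ to a finite set $B$; that set is then pared down through auxiliary families $C$, $D_{1}$, $D_{2}$ and a number of residual subcases ($r=0$, $r=r_{4}$, $A_{1}\cup A_{2}\cup A_{4}=\mathbb{Z}$, $A_{5}\cup A_{6}=\mathbb{Z}$), with computer verification at the tail. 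Nothing in your sketch plays this role, so the ``only if'' direction is open.

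Second, for the nine sporadic cases, ``confirmed by direct substitution at one convenient $\tau$'' computes the candidate constant but does not prove $J_{S}$ is constant. Since $J_{S}$ is holomorphic on $\mathcal{H}$, constancy is equivalent to boundedness at \emph{every} cusp of $X(\Gamma_{S})$ (or to vanishing of sufficiently many $q$-coefficients once one bounds the degree of $J_{S}$ on $X(n)$); your plan inspects only $\infty$ and gives no degree bound, so there is nothing to cut off the infinitely many coefficients. The paper instead proves each sporadic case by an explicit algebraic identity in the Jacobian or Hessian model --- e.g.\ the factorization of $x^{4}+2\delta x^{3}-2\delta^{-1}x-1$ by the $3$-torsion $x$-coordinates for cases (\ref{case6}), (\ref{case7}), and the quartic $cx^{4}+2c^{2}x^{3}+(2c^{3}+1)x^{2}+2cx+c^{2}=0$ for case (\ref{case11}) --- and this simultaneously gives the ``if'' direction over an arbitrary algebraically closed field $K$ under the stated characteristic restrictions, which your $\mathbb{C}$-analytic argument would still need a separate specialization step to recover.
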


We will prove Theorem \ref{thm1} in three steps: (I) we obtain a
necessary condition for $S$ to give a constant image; (II) we find
all $S$ that satisfy the necessary condition found in (I); and (III)
we prove that all $S$ found in (II) indeed give a constant image.

\section{Proof of Theorem \texorpdfstring{\ref{thm1}}{1}: Step (I)}

Since we simply attempt to find a necessary condition in this section,
we can temporarily assume that $K=\mathbb{C}$, so that the analytic
uniformization can be applied.
\begin{thm}
\textup{\label{thm2}\cite[Page 410, Theorem 1.1]{MR1312368}} For
$u,q\in\mathbb{C}$ with $\left|q\right|<1$, define quantities
\begin{eqnarray*}
s_{k}(q) & = & \sum_{n\geq1}\frac{n^{k}q^{n}}{1-q^{n}},\\
a_{4}(q) & = & -5s_{3}(q),\\
a_{6}(q) & = & -\frac{5s_{3}(q)+7s_{5}(q)}{12},\\
X(u,q) & = & \sum_{n\in\mathbb{Z}}\frac{q^{n}u}{(1-q^{n}u)^{2}}-2s_{1}(q),\\
Y(u,q) & = & \sum_{n\in\mathbb{Z}}\frac{(q^{n}u)^{2}}{(1-q^{n}u)^{3}}+s_{1}(q),\\
E_{q} & : & y^{2}+xy=x^{3}+a_{4}(q)x+a_{6}(q).
\end{eqnarray*}
\begin{itemize}
\item $E_{q}$ is an elliptic curve, and $X$ and $Y$ define a complex
analytic isomorphism
\begin{eqnarray*}
\phi:\mathbb{C}^{*}/q^{\mathbb{Z}} & \to & E_{q}(\mathbb{C})\\
u & \mapsto & \begin{cases}
(X(u,q),Y(u,q)) & \textup{if }u\notin q^{\mathbb{Z}},\\
O & \textup{if }u\in q^{\mathbb{Z}}.
\end{cases}
\end{eqnarray*}
\item For every elliptic curve $E/\mathbb{C}$ there is a $q\in\mathbb{C}^{*}$
with $\left|q\right|<1$ such that $E$ is isomorphic to $E_{q}$.
\end{itemize}
\end{thm}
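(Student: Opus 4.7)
The plan is to reduce both assertions to the classical complex-analytic theory of elliptic curves via the exponential uniformization $\mathbb{C}/(\mathbb{Z}+\mathbb{Z}\tau)\cong\mathbb{C}^{*}/q^{\mathbb{Z}}$ under $z\mapsto e^{2\pi iz}=u$, $q=e^{2\pi i\tau}$. First I would establish that $\phi$ is a holomorphic group isomorphism and then use the $j$-invariant to deduce surjectivity of $q\mapsto E_{q}$ onto isomorphism classes of elliptic curves over $\mathbb{C}$.

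\textbf{Step 1 (analytic properties).} For $|q|<1$, splitting the defining sums for $X(u,q)$ and $Y(u,q)$ into $n\geq 0$ and $n<0$ and estimating geometrically gives absolute, locally uniform convergence on $\mathbb{C}^{*}\setminus q^{\mathbb{Z}}$, so $X$ and $Y$ are meromorphic on $\mathbb{C}^{*}$ with poles exactly along $q^{\mathbb{Z}}$. The substitution $u\mapsto qu$ corresponds to the index shift $n\mapsto n-1$ in these absolutely convergent sums, yielding $X(qu,q)=X(u,q)$ and $Y(qu,q)=Y(u,q)$; hence both descend to $\mathbb{C}^{*}/q^{\mathbb{Z}}$.

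\textbf{Step 2 (cubic relation and bijectivity).} Via $u=e^{2\pi iz}$, classical Fourier expansions identify $(2\pi i)^{-2}\wp(z;\mathbb{Z}+\mathbb{Z}\tau)$, up to an additive constant depending only on $\tau$, with the Lambert-style series defining $X(u,q)$, and analogously for $\wp'$ and $Y(u,q)$. Substituting into $(\wp')^{2}=4\wp^{3}-g_{2}\wp-g_{3}$ and performing the linear change of coordinates that introduces the $xy$ term produces $y^{2}+xy=x^{3}+a_{4}x+a_{6}$; matching the resulting coefficients against the $q$-expansions of the Eisenstein series $G_{4}$ and $G_{6}$ yields precisely the stated formulas $a_{4}(q)=-5s_{3}(q)$ and $a_{6}(q)=-(5s_{3}(q)+7s_{5}(q))/12$. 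The involution $u\mapsto u^{-1}$ preserves $X$ and sends $Y$ to $-Y-X$, matching $P\mapsto -P$ on the Weierstrass model $y^{2}+xy=x^{3}+a_{4}x+a_{6}$; a pole count then gives bijectivity, and compatibility of $\phi$ with the group law follows from the Weierstrass case. For the second bullet, compute $j(E_{q})=1/q+744+O(q)$ from $a_{4}$ and $a_{6}$; up to the substitution $q=e^{2\pi i\tau}$ this is the modular $j$-function, which by the classical theory attains every value in $\mathbb{C}$, so every $E/\mathbb{C}$ is isomorphic to some $E_{q}$.

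The main obstacle is Step 2: verifying the cubic identity ab initio from the Lambert series for $X$ and $Y$ requires nontrivial $q$-series manipulation, but routing through the Weierstrass $\wp$-function and the known $q$-expansions of Eisenstein series reduces the whole thing to standard coefficient bookkeeping; I would expect to cite this part from \cite{MR1312368} rather than reprove it.
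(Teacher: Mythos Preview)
The paper does not prove this statement at all: Theorem~\ref{thm2} is stated purely as a citation to Silverman \cite[Page 410, Theorem 1.1]{MR1312368}, with no accompanying proof. Your outline is a reasonable sketch of how the result is established in Silverman (reduction to the Weierstrass $\wp$-function via $u=e^{2\pi i z}$, identification of $a_4,a_6$ with Eisenstein coefficients, and surjectivity of $j$), and indeed you yourself note at the end that you would cite \cite{MR1312368} rather than reprove it; that is exactly what the paper does.
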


\begin{defn}
\label{def3}$ $
\begin{itemize}
\item For any $r\in\mathbb{Q}$, define $F(r)\in[0,1/2]$ to be the unique
element such that $F(r)\equiv\pm r\textup{ mod }\mathbb{Z}$. A set
$\{r_{1},r_{2},r_{3},r_{4}\}\subseteq\mathbb{Q}$ is said to be ``good''
if
\[
F(r_{\sigma(1)})\leq F(r_{\sigma(2)})<F(r_{\sigma(3)})\leq F(r_{\sigma(4)})
\]
for some permutation $\sigma\in\mathfrak{S}_{4}$.
\item For any $(r,\theta)\in\mathbb{Q}^{2}$, define $F(r,\theta)\in\mathcal{R}$
to be the unique element such that $F(r,\theta)\equiv\pm(r,\theta)\textup{ mod }\mathbb{Z}^{2}$.
A set $\{(r_{i},\theta_{i})\}_{i=1}^{4}\subseteq\mathbb{Q}^{2}$ is
said to be ``good'' if there exist $a,b\in\mathbb{Z}$ such that
$\gcd(a,b)=1$ and $\{ar_{i}+b\theta_{i}\}_{i=1}^{4}$ is ``good''.
\end{itemize}
\end{defn}

\begin{lem}
\label{lem4}Let $u_{i}=\exp(2\pi\sqrt{-1}\theta_{i})q^{r_{i}}$,
$1\leq i\leq4$, be four distinct torsion points on $\pm\backslash\mathbb{C}^{*}/q^{\mathbb{Z}}$.
If $\{r_{1},r_{2},r_{3},r_{4}\}$ is ``good'', then the cross ratio
of $\{X(u_{i},q)\}_{i=1}^{4}$ is not a constant function of $q$.
\end{lem}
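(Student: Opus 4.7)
\emph{Proof proposal.} The plan is to track the leading $q$-asymptotics of $X(u_i,q)$ as $q\to 0$. With $u_i = \zeta_i q^{r_i}$, $\zeta_i$ a root of unity and $r_i \in [0,1/2]\cap\mathbb{Q}$, the $n=0$ term of the defining sum contributes $\zeta_i q^{r_i}+O(q^{2r_i})$, the $n=-1$ term (rewritten via $|q^{-1}u|>1$) contributes $\zeta_i^{-1} q^{1-r_i}+\cdots$, and all other $n$ together with $-2s_1(q)$ enter at strictly higher order. This yields the size bound $|X(u_i,q)| = O(|q|^{F(r_i)})$ as $q\to 0$; moreover, when $0<r_i<1/2$ the leading term is exactly $\zeta_i q^{r_i}\neq 0$, and when $r_i=0$ the leading term is the nonzero constant $\zeta_i/(1-\zeta_i)^2$.

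Now invoke the good hypothesis: after relabeling we may assume $F(r_1)\le F(r_2)<F(r_3)\le F(r_4)$, which in particular forces $F(r_1),F(r_2)<1/2$. Hence $X_1 := X(u_1,q)$ and $X_2 := X(u_2,q)$ have honest nonvanishing leading terms of orders $q^{F(r_1)}$ and $q^{F(r_2)}$, while $|X_3|,|X_4|=O(|q|^{F(r_3)})$ lie at strictly higher $q$-order. Thus $X_3/X_i, X_4/X_i \to 0$ for $i\in\{1,2\}$, and each difference $X_i-X_j$ with $i\in\{1,2\}$, $j\in\{3,4\}$ equals $X_i(1+o(1))$. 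Consequently the cross ratio
\[
\lambda(q) \;=\; \frac{(X_1-X_3)(X_2-X_4)}{(X_1-X_4)(X_2-X_3)}
\]
tends to $1$ as $q\to 0$.

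If $\lambda$ were constant in $q$, it would therefore equal $1$, and the identity $\lambda \equiv 1$ rearranges to $(X_1-X_2)(X_3-X_4)\equiv 0$ on the punctured disk. By connectedness and analyticity, one of $X_1\equiv X_2$ or $X_3\equiv X_4$ holds identically. An identity of the form $X(\zeta_i q^{r_i},q)\equiv X(\zeta_j q^{r_j},q)$ means that for every $q$, $u_i \equiv u_j^{\pm 1} \pmod{q^{\mathbb{Z}}}$; matching the powers of $q$ and the constant prefactors (using $r_i,r_j\in[0,1/2]$ and the $\zeta$'s constant) forces either $r_i=r_j$ with $\zeta_i=\zeta_j$, or $r_i+r_j\in\{0,1\}$ with $\zeta_i\zeta_j=1$. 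In either case $u_i$ and $u_j$ represent the same point of $\pm\backslash \mathbb{C}^*/q^{\mathbb{Z}}$, contradicting the assumed distinctness of the four torsion points.

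The main technical point is the boundary $r = 1/2$, where the nominal leading coefficient $\zeta+\zeta^{-1}$ can vanish (at $\zeta = \pm i$) so that the true order of $X$ jumps higher. Because the good condition already ensures $F(r_1),F(r_2)<1/2$ strictly, this anomaly can only affect $X_3$ and $X_4$, for which only the $O$-upper bound is used; the cross-ratio estimate is therefore robust.
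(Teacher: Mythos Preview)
Your argument is correct and uses the same $q\to 0$ asymptotics as the paper, but the endgame is organized differently. The paper orders the points and takes the cross ratio in the form $\frac{(X_1-X_2)(X_3-X_4)}{(X_1-X_4)(X_3-X_2)}$; having first checked that distinct $u_i$ always produce distinct lowest--degree terms of $X(u_i,q)$, it reads off the leading term as $c\,q^{r_3-r_2}$ with $c\neq 0$, so non\-constancy is immediate from $r_2<r_3$. You instead take the complementary cross ratio (related by $z\mapsto 1-z$), show it tends to $1$, and then argue separately that $\lambda\equiv 1$ would force $X_i\equiv X_j$ for some pair, hence $u_i=u_j$ in $\pm\backslash\mathbb{C}^*/q^{\mathbb{Z}}$. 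Both routes work; the paper's avoids the final contradiction step at the price of needing the ``distinct leading terms'' observation up front, while yours only needs crude order bounds for the limit and defers the finer comparison to the $\lambda\equiv 1$ step.

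One small omission: your claim that for $r_i=0$ the leading term is the nonzero constant $\zeta_i/(1-\zeta_i)^2$ fails when $\zeta_i=1$, i.e., when $u_i$ is the identity $O$ (which is a legitimate torsion point in the lemma's hypotheses). Then $X(u_i,q)=\infty$ and the phrase ``$X_i-X_j=X_i(1+o(1))$'' is not literally meaningful. The paper treats this case separately, observing that the cross ratio then collapses to a three--term expression. In your framework the same patch works: if after ordering $u_1=1$, your $\lambda$ reduces to $(X_2-X_4)/(X_2-X_3)$, which still tends to $1$, and $\lambda\equiv 1$ then gives $X_3\equiv X_4$ directly.
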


\begin{proof}
Assume that $(r_{i},\theta_{i})\in\mathcal{R}$ and they are ordered
lexicographically. First note that
\[
s_{1}(q)=\sum_{n\geq1}\frac{nq^{n}}{1-q^{n}}=\sum_{n\geq1}n\sum_{m\geq1}(q^{n})^{m}=\sum_{n\geq1}\sum_{m\geq1}m(q^{n})^{m},
\]
then
\begin{eqnarray*}
X(u,q) & = & \sum_{n\in\mathbb{Z}}\frac{q^{n}u}{(1-q^{n}u)^{2}}-2s_{1}(q)\\
 & = & \frac{u}{(1-u)^{2}}+\sum_{n\geq1}\left(\frac{q^{n}u}{(1-q^{n}u)^{2}}+\frac{q^{n}u^{-1}}{(1-q^{n}u^{-1})^{2}}\right)-2s_{1}(q)\\
 & = & \frac{u}{(1-u)^{2}}+\sum_{n\geq1}\sum_{m\geq1}\left(m(q^{n}u)^{m}+m(q^{n}u^{-1})^{m}-2m(q^{n})^{m}\right)\\
 & = & \begin{cases}
(\zeta+\zeta^{-1}-2)^{-1}+\textup{higher degree terms of }q & \textup{if }u=\zeta,\\
\zeta q^{r}+\textup{higher degree terms of }q & \textup{if }u=\zeta q^{r}\textup{ with }0<r<1/2,\\
(\zeta+\zeta^{-1})q^{1/2}+\textup{higher degree terms of }q & \textup{if }u=\zeta q^{1/2}\textup{ with }\zeta\neq\sqrt{-1},\\
-6q+\textup{higher degree terms of }q & \textup{if }u=\sqrt{-1}q^{1/2}.
\end{cases}
\end{eqnarray*}
In particular, we find that if $u_{i}\ne u_{j}$, then the lowest
degree terms of $X(u_{i},q)$ and $X(u_{j},q)$ are different. If
$u_{1}\neq1$, then the cross ratio of $\{X(u_{i},q)\}_{i=1}^{4}$
is
\[
\frac{(X(u_{1},q)-X(u_{2},q))(X(u_{3},q)-X(u_{4},q))}{(X(u_{1},q)-X(u_{4},q))(X(u_{3},q)-X(u_{2},q))}=cq^{r_{3}-r_{2}}+\textup{higher degree terms of }q\textup{ for some }c\neq0.
\]
If $u_{1}=1$, then the cross ratio of $\{X(u_{i},q)\}_{i=1}^{4}$
is
\[
\frac{X(u_{3},q)-X(u_{4},q)}{X(u_{3},q)-X(u_{2},q)}=cq^{r_{3}-r_{2}}+\textup{higher degree terms of }q\textup{ for some }c\neq0.
\]
Since $r_{2}<r_{3}$ by the assumption, the cross ratio is nonconstant.
\end{proof}
\begin{cor}
\label{cor5}Let $u_{i}=\exp(2\pi\sqrt{-1}\theta_{i})q^{r_{i}}$,
$1\leq i\leq4$, be four distinct torsion points on $\pm\backslash\mathbb{C}^{*}/q^{\mathbb{Z}}$.
If $\{(r_{i},\theta_{i})\}_{i=1}^{4}$ is ``good'', then the cross
ratio of $\{X(u_{i},q)\}_{i=1}^{4}$ is not a constant function of
$q$.
\end{cor}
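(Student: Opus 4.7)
The plan is to reduce Corollary \ref{cor5} to Lemma \ref{lem4} by performing an $\textup{SL}_{2}(\mathbb{Z})$-change of basis on the period lattice; the second half of Definition \ref{def3} was tailored exactly to enable such a reduction.

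First I would invoke Definition \ref{def3} to obtain coprime integers $p,s\in\mathbb{Z}$ for which $\{pr_{i}+s\theta_{i}\}_{i=1}^{4}$ is ``good'' in the one-dimensional sense. By Bezout's identity I pick $t,v\in\mathbb{Z}$ with $pt+sv=1$ and set $\gamma=\begin{pmatrix}t & v\\ -s & p\end{pmatrix}\in\textup{SL}_{2}(\mathbb{Z})$. Writing $q=\exp(2\pi\sqrt{-1}\tau)$ with $\tau$ in the upper half-plane, I define $\omega_{1}'=t\tau+v$, $\omega_{2}'=-s\tau+p$, $\tau'=\omega_{1}'/\omega_{2}'$, and $q'=\exp(2\pi\sqrt{-1}\tau')$, so that $\{\omega_{1}',\omega_{2}'\}$ is a second $\mathbb{Z}$-basis of the period lattice $\Lambda=\mathbb{Z}\tau+\mathbb{Z}$ and $\tau'$ still lies in the upper half-plane.

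Next I would re-express each torsion point $P_{i}=\theta_{i}+r_{i}\tau$ in the new basis: a short calculation yields new coordinates $(r_{i}',\theta_{i}')=(pr_{i}+s\theta_{i},\,-vr_{i}+t\theta_{i})$ and a corresponding exponential representative $u_{i}'=\exp(2\pi\sqrt{-1}\theta_{i}')(q')^{r_{i}'}$ under the new uniformization. Since $(E_{q},\pi)$ and $(E_{q'},\pi')$ are two Weierstrass realizations of the same complex elliptic curve $\mathbb{C}/\Lambda$ as a double cover of $\mathbb{P}^{1}$, the tuples $(X(u_{i},q))_{i=1}^{4}$ and $(X(u_{i}',q'))_{i=1}^{4}$ differ by an automorphism of $\mathbb{P}^{1}$. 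Cross ratios being $\textup{PGL}_{2}$-invariant, the cross ratio of $\{X(u_{i},q)\}_{i=1}^{4}$ coincides with that of $\{X(u_{i}',q')\}_{i=1}^{4}$ as a holomorphic function on the upper half-plane.

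Finally, by construction $\{r_{i}'\}_{i=1}^{4}=\{pr_{i}+s\theta_{i}\}_{i=1}^{4}$ is ``good'' in the one-dimensional sense, so Lemma \ref{lem4} applied in the primed coordinates shows that the cross ratio of $\{X(u_{i}',q')\}_{i=1}^{4}$ is nonconstant as a function of $q'$. Since $\tau$ and $\tau'$ are related by the invertible M\"obius transformation $\tau'=\gamma\tau$, nonconstancy in $q'$ is equivalent to nonconstancy in $q$, completing the argument. I anticipate the main obstacle to lie in paragraph three: one must check carefully that the two Weierstrass uniformizations really produce the same four-tuple of $\mathbb{P}^{1}$-points up to a $\textup{PGL}_{2}$-action, and that under the new uniformization the point $P_{i}$ is correctly described by $(r_{i}',\theta_{i}')$.
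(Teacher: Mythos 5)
Your proposal is correct and follows essentially the same route as the paper: pick $\gamma\in\textup{SL}_{2}(\mathbb{Z})$ realizing the change of variables $\{(r_{i},\theta_{i})\}\mapsto\{(pr_{i}+s\theta_{i},\cdot)\}$, change basis of the period lattice so that the first coordinates become ``good'', note that the resulting $\{X(u_{i}',q')\}$ differ from $\{X(u_{i},q)\}$ by an element of $\textup{PGL}_{2}$, and invoke Lemma \ref{lem4}. The step you flag as the main obstacle is exactly what the paper makes explicit, via Silverman's transformation formula $\wp(z/\omega_2',\Lambda/\omega_2')=\omega_2'^{2}\wp(z,\Lambda)$, which shows the relation is in fact the affine map $X(u_{i},q)=\dfrac{X(u_{i}',q')+1/12}{(-b\tau+a)^{2}}-\dfrac{1}{12}$.
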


\begin{proof}
By Definition \ref{def3}, there exist $a,b\in\mathbb{Z}$ such that
$\gcd(a,b)=1$ and $\{ar_{i}+b\theta_{i}\}_{i=1}^{4}$ is ``good''.
Since $\gcd(a,b)=1$, there exist $c,d\in\mathbb{Z}$ such that $\gamma=\begin{pmatrix}a & b\\
c & d
\end{pmatrix}\in\textup{SL}_{2}(\mathbb{Z})$. Let $(r'_{i},\theta'_{i})=(r{}_{i},\theta{}_{i})\cdot\gamma^{T}$,
$\tau=\dfrac{\log(q)}{2\pi\sqrt{-1}}$, $\tau'=\gamma^{-T}\cdot\tau=\dfrac{d\tau-c}{-b\tau+a}$,
$q'=\exp\left(2\pi\sqrt{-1}\tau'\right)$, and $u'_{i}=\exp(2\pi\sqrt{-1}\theta'_{i})q'^{r'_{i}}$.
Then by \cite[Page 50, Theorem 6.2]{MR1312368},
\begin{eqnarray*}
X(u_{i},q) & = & -\frac{\wp(r_{i}\tau+\theta_{i},\mathbb{Z}\tau+\mathbb{Z})}{4\pi^{2}}-\frac{1}{12}\\
 & = & -\frac{\wp(r'_{i}(d\tau-c)+\theta'_{i}(-b\tau+a),\mathbb{Z}(d\tau-c)+\mathbb{Z}(-b\tau+a))}{4\pi^{2}}-\frac{1}{12}\\
 & = & -\frac{\wp(r'_{i}\tau'+\theta'_{i},\mathbb{Z}\tau'+\mathbb{Z})}{4\pi^{2}(-b\tau+a)^{2}}-\frac{1}{12}\\
 & = & -\frac{-4\pi^{2}(X(u'_{i},q')+\frac{1}{12})}{4\pi^{2}(-b\tau+a)^{2}}-\frac{1}{12}\\
 & = & \frac{X(u'_{i},q')+\frac{1}{12}}{(-b\tau+a)^{2}}-\frac{1}{12}.
\end{eqnarray*}
Since $\{r'_{1},r'_{2},r'_{3},r'_{4}\}$ is ``good'' by the assumption,
the cross ratio of $\{X(u'_{i},q')\}_{i=1}^{4}$ is nonconstant, and
the cross ratio of $\{X(u_{i},q)\}_{i=1}^{4}$ is also nonconstant.
\end{proof}

\section{Proof of Theorem \texorpdfstring{\ref{thm1}}{1}: Step (II)}
\begin{lem}
\label{lem6}$S\in\mathcal{R}_{4}$ is not ``good'' if and only
if $S$ is $\textup{SL}_{2}(\mathbb{Z})$-equivalent to exactly one
of the quadruples listed in Theorem \ref{thm1}.
\end{lem}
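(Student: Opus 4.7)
The lemma is an equivalence between the combinatorial condition ``not good'' and membership in an explicit list of $\textup{SL}_{2}(\mathbb{Z})$-orbits, so my plan is to prove the two implications separately and then verify disjointness.

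For the ``if'' direction I would verify that each listed quadruple $S$ is not ``good''. Fixing $S$, the common order $n$ is determined (in terms of $r$ for the two infinite families), and the projection multiset $\{F(ar_{i}+b\theta_{i})\}_{i=1}^{4}$ depends only on the class of $(a,b)$ modulo $n$. For the finite cases (\ref{case3})--(\ref{case11}), with $n\in\{3,4,6,8,12\}$, one assembles a finite table of projections indexed by coprime $(a,b)\bmod n$ and observes in each entry that, after sorting, the two middle values coincide. For the infinite families (\ref{case1}) and (\ref{case2}), the internal involution $\theta\mapsto 1/2-\theta$ in family (\ref{case1}), respectively $r\mapsto 1/2-r$ in family (\ref{case2}), pairs the four projections for any coprime $(a,b)$, and a short case split on the parities of $a$ and $b$ shows that two projections always coincide at the central value, uniformly in the parameter $r$.

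For the ``only if'' direction, suppose $S\in\mathcal{R}_{4}$ is not ``good''. I would first use the $\textup{SL}_{2}(\mathbb{Z})$-action to normalize $S$, for instance by mapping one distinguished point (say, one of maximal order) to a canonical representative. Writing $n$ for the common order, the four points become four distinct classes in $\pm\backslash(\mathbb{Z}/n\mathbb{Z})^{2}$. Applying the ``not good'' hypothesis to the directions $(1,0)$ and $(0,1)$ forces both multisets $\{F(r_{i})\}$ and $\{F(\theta_{i})\}$ to have a repeated middle value, and further directions $(1,k)$ yield additional linear constraints on the $(r_{i},\theta_{i})$. I would then partition the analysis according to how many of the $r_{i}$ lie in the $2$-torsion set $\{0,1/2\}$, and similarly for the $\theta_{i}$, which separates into a small number of structural subcases each of which should map onto one of the listed orbits.

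The main obstacle will be proving completeness of the list. I expect a two-step reduction: first, reducing modulo a prime $p\geq 5$ and exploiting the projections $(1,k)\bmod p$ for all $k$ should show that the four reduced points must collapse to fewer than four distinct classes, forcing $n$ to be supported on $\{2,3\}$; second, a descent on the $2$- and $3$-adic valuations of $n$, using projection directions adapted to each prime power, should leave only finitely many candidate orbits, which can be enumerated and matched against the list. Finally, the pairwise $\textup{SL}_{2}(\mathbb{Z})$-inequivalence of the eleven families follows from $\textup{SL}_{2}(\mathbb{Z})$-invariants such as the common order $n$ together with the multiset of individual point orders and the projection profile.
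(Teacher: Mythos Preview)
Your ``only if'' direction contains a genuine gap. You claim that reducing modulo a prime $p\geq 5$ will force the common order $n$ to be supported on $\{2,3\}$, and that a subsequent $2$- and $3$-adic descent will leave only finitely many candidate orbits. Both assertions are false, and the counterexamples are already in the list you are trying to characterize: in family~(\ref{case1}) with $a=1/(2r)$ the common order is $\textup{lcm}(2r,4)$, and similarly for family~(\ref{case2}), so $n$ can carry an arbitrary prime factor. The list of non-``good'' orbits is genuinely infinite, so no argument ending in a finite enumeration can be complete. It is true that in these examples the four points collapse modulo~$p$, but collapse is entirely compatible with $p\mid n$; it does not force $p\nmid n$. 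Your own ``if'' discussion treats~(\ref{case1}) and~(\ref{case2}) as infinite families, so the two halves of the proposal are internally inconsistent.

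The paper circumvents this by bounding a different invariant. It passes to the lexicographically minimal representative in the orbit, which forces the shape $S=\{(0,\theta_{1}),(r,\theta_{2}),(r,\theta_{3}),(r_{4},\theta_{4})\}$, and then applies the one-parameter family $\begin{pmatrix}1&0\\c&1\end{pmatrix}$. The ``not good'' condition for every $c\in\mathbb{Z}$ becomes a covering of $\mathbb{Z}$ by at most eight arithmetic progressions whose common differences $d_{i}$ depend only on $r$ and $r_{4}$; the density inequality $\sum_{i}1/d_{i}\geq 1$ then bounds the denominators of $r$ and $r_{4}$ while leaving $\theta_{1}$ free, which is precisely where the infinite families sit. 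The finitely many admissible pairs $(r,r_{4})$ are then sieved by further combinatorial constraints and a computer search, with families~(\ref{case1}) and~(\ref{case2}) emerging from the boundary cases $r=0$ and $(r,r_{4})=(1/4,1/2)$ respectively. Your ``if'' sketch, by contrast, is essentially sound.
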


\begin{proof}
Suppose that $S$ is not ``good''. Without loss of generality, we
assume that $S$ is the minimal element within its $\textup{SL}_{2}(\mathbb{Z})$-orbit
with respect to the lexicographical order. It must be of the form
\[
S=\{(0,\theta_{1}),(r,\theta_{2}),(r,\theta_{3}),(r_{4},\theta_{4})\}\in\mathcal{R}_{4},
\]
where $0\leq r\leq r_{4}\leq1/2$ and $\theta_{2}\neq\theta_{3}$.
By the minimality of $S$, if $\theta_{1}=0$, then $r=0$; if $\theta_{1}\neq0$,
then $1/\theta_{1}=\textup{ord}(\theta_{1})\geq\textup{ord}((r,\theta_{2})),\textup{ord}((r,\theta_{3})),\textup{ord}((r_{4},\theta_{4}))$.
Let the lower triangular matrix $\begin{pmatrix}1 & 0\\
c & 1
\end{pmatrix}\in\textup{SL}_{2}(\mathbb{Z})$ act on $S$, we get
\[
\{(0,\theta_{1}),(r,rc+\theta_{2}),(r,rc+\theta_{3}),(r_{4},r_{4}c+\theta_{4})\}.
\]
Consider the following eight sets
\[
\begin{aligned} & A_{1}=\{c\in\mathbb{Z}:rc+\theta_{2}\equiv0\textup{ mod }1\}, &  & d_{1}=\textup{ord}(r),\\
 & A_{2}=\{c\in\mathbb{Z}:rc+\theta_{3}\equiv0\textup{ mod }1\}, &  & d_{2}=\textup{ord}(r),\\
 & A_{3}=\{c\in\mathbb{Z}:r_{4}c+\theta_{4}\equiv0\textup{ mod }1\}, &  & d_{3}=\textup{ord}(r_{4}),\\
 & A_{4}=\{c\in\mathbb{Z}:2rc+\theta_{2}+\theta_{3}\equiv0\textup{ mod }1\}, &  & d_{4}=\textup{ord}(2r),\\
 & A_{5}=\{c\in\mathbb{Z}:(r_{4}+r)c+\theta_{4}+\theta_{2}\equiv0\textup{ mod }1\}, &  & d_{5}=\textup{ord}(r_{4}+r),\\
 & A_{6}=\{c\in\mathbb{Z}:(r_{4}+r)c+\theta_{4}+\theta_{3}\equiv0\textup{ mod }1\}, &  & d_{6}=\textup{ord}(r_{4}+r),\\
 & A_{7}=\{c\in\mathbb{Z}:(r_{4}-r)c+\theta_{4}-\theta_{2}\equiv0\textup{ mod }1\}, &  & d_{7}=\textup{ord}(r_{4}-r),\\
 & A_{8}=\{c\in\mathbb{Z}:(r_{4}-r)c+\theta_{4}-\theta_{3}\equiv0\textup{ mod }1\}, &  & d_{8}=\textup{ord}(r_{4}-r).
\end{aligned}
\]
For any $1\leq i\leq8$, $A_{i}$ is an empty set or an arithmetic
progression. Whenever $A_{i}\neq\varnothing$, $d_{i}$ is the difference
of its consecutive terms. We note that if $d_{1}$ is even, then $d_{1}=2d_{4}$;
if $d_{1}$ is odd, then $d_{1}=d_{4}$.

We claim that $\cup_{i=1}^{8}A_{i}=\mathbb{Z}$ and $\sum_{i=1}^{8}1/d_{i}\geq1$.\\
Take any $c\in\mathbb{Z}$, we know that $\{\theta_{1},rc+\theta_{2},rc+\theta_{3},r_{4}c+\theta_{4}\}$
is not ``good''. If $c\notin\cup_{i=1}^{3}A_{i}$, then by the minimality
of $S$, $\min\{F(\theta_{1}),F(rc+\theta_{2}),F(rc+\theta_{3}),F(r_{4}c+\theta_{4})\}=F(\theta_{1})$.
Therefore, two of $F(rc+\theta_{2})$, $F(rc+\theta_{3})$, and $F(r_{4}c+\theta_{4})$
must be equal, which exactly means $c\in\cup_{i=4}^{8}A_{i}$. If
$A_{i}\neq\varnothing$, then $A_{i}$ covers $1/d_{i}$ of $\mathbb{Z}$.
Since $\cup_{i=1}^{8}A_{i}=\mathbb{Z}$, we have $\sum_{i=1}^{8}1/d_{i}\geq1$.

Now we use this criterion to find all possible $S$ with the help
of \textbf{computer programs} \cite{Mathematica}.
\begin{itemize}
\item Assume that $\min\{d_{i}\}_{i=1}^{8}\geq4$.\\
Let $B_{1}=\{(r,r_{4})\in\mathbb{Q}^{2}:0\leq r\leq r_{4}\leq1/2,\sum_{i=1}^{8}1/d_{i}\geq1,\min\{d_{i}\}_{i=1}^{8}\geq4\}$.
\begin{itemize}
\item If $d_{5},d_{7}\geq27$, then $\sum_{i=1}^{8}1/d_{i}\leq1/4+3/5+4/27=539/540<1$.
\item Let $B_{11}=\{(r,r_{4})\in B_{1}:d_{5},d_{7}\leq26\}$.
\item If $4\leq d_{5}\leq26$, $d_{7}\geq27$, and $d_{1}\geq23$, then
$\sum_{i=1}^{8}1/d_{i}\leq3/4+4/23+2/27=2479/2484<1$.
\item Let $B_{12}=\{(r,r_{4})\in B_{1}:d_{1}\leq22,d_{5}\leq26\}$.
\item If $4\leq d_{7}\leq26$, $d_{5}\geq27$, and $d_{1}\geq23$, then
$\sum_{i=1}^{8}1/d_{i}\leq3/4+4/23+2/27=2479/2484<1$.
\item Let $B_{13}=\{(r,r_{4})\in B_{1}:d_{1}\leq22,d_{7}\leq26\}$.
\end{itemize}
\item Assume that $\min\{d_{i}\}_{i=1}^{8}=3$ and $A_{1}\cup A_{2}\cup A_{4}\neq\mathbb{Z}$.\\
Let $B_{2}=\{(r,r_{4})\in\mathbb{Q}^{2}:0\leq r\leq r_{4}\leq1/2,\sum_{i=1}^{8}1/d_{i}\geq1,\min\{d_{i}\}_{i=1}^{8}\geq3\}$.
\begin{itemize}
\item If $d_{5}=3$ and $d_{7}=n$, then $d_{1},d_{3},d_{4}\geq n/3$. If
$n>42$, then $\sum_{i=1}^{8}1/d_{i}\leq2/3+14/n<1$.
\item Let $B_{21}=\{(r,r_{4})\in B_{2}:d_{5}=3,d_{7}\leq42\}$.
\item If $d_{7}=3$ and $d_{5}=n$, then $d_{1},d_{3},d_{4}\geq n/3$. If
$n>42$, then $\sum_{i=1}^{8}1/d_{i}\leq2/3+14/n<1$.
\item Let $B_{22}=\{(r,r_{4})\in B_{2}:d_{7}=3,d_{5}\leq42\}$.
\item If $r_{4}=1/3$ and $d_{1}=n$, then $d_{5},d_{7}\geq n/3$. If $n>24$,
then $\sum_{i=1}^{8}1/d_{i}\leq1/3+16/n<1$.
\item Let $B_{23}=\{(r,r_{4})\in B_{2}:r_{4}=1/3,d_{1}\le24\}$.
\item If $r=1/6$ and $d_{3}=n$, then $d_{5},d_{7}\geq n/6$. If $n>75$,
then $\sum_{i=1}^{8}1/d_{i}\leq2/3+25/n<1$.
\item Let $B_{24}=\{(r,r_{4})\in B_{2}:r=1/6,d_{3}\le75\}$.
\item If $r=1/3$ and $d_{3}=n$, then $d_{5},d_{7}\geq n/3$. Since we
assumed $A_{1}\cup A_{2}\cup A_{4}\neq\mathbb{Z}$, we only need $\sum_{i\neq1}1/d_{i}\geq1$.
If $n>39$, then $\sum_{i\neq1}1/d_{i}\leq2/3+13/n<1$.
\item Let $B_{25}=\{(r,r_{4})\in B_{2}:r=1/3,d_{3}\le39,\sum_{i\neq1}1/d_{i}\geq1\}$.
\end{itemize}
\item Assume that $\min\{d_{i}\}_{i=1}^{8}=2$, $A_{1}\cup A_{2}\cup A_{4}\neq\mathbb{Z}$,
and $A_{5}\cup A_{6}\neq\mathbb{Z}$.\\
Let $B_{3}=\{(r,r_{4})\in\mathbb{Q}^{2}:0\leq r\leq r_{4}\leq1/2,\sum_{i=1}^{8}1/d_{i}\geq1,\min\{d_{i}\}_{i=1}^{8}\geq2\}$.
\begin{itemize}
\item If $d_{5}=2$ and $d_{7}=n$, then $d_{1},d_{3},d_{4}\geq n/2$. Since
we assumed $A_{5}\cup A_{6}\neq\mathbb{Z}$, we only need $\sum_{i\neq5}1/d_{i}\geq1$.
If $n>20$, then $\sum_{i\neq5}1/d_{i}\leq1/2+10/n<1$.
\item Let $B_{31}=\{(r,r_{4})\in B_{3}:d_{5}=2,d_{7}\leq20,\sum_{i\neq5}1/d_{i}\geq1\}$.
\item If $r_{4}=1/2$ and $d_{1}=n$, then $d_{5},d_{7}\geq n/2$. If $n>24$,
then $\sum_{i=1}^{8}1/d_{i}\leq1/2+12/n<1$.
\item Let $B_{32}=\{(r,r_{4})\in B_{3}:r_{4}=1/2,d_{1}\le24\}$.
\item If $r=1/4$ and $d_{3}=n$, then $d_{5},d_{7}\geq n/4$. Since we
assumed $A_{1}\cup A_{2}\cup A_{4}\neq\mathbb{Z}$, we only need $\sum_{i\neq1}1/d_{i}\geq1$.
If $n>68$, then $\sum_{i\neq1}1/d_{i}\leq3/4+17/n<1$.
\item Let $B_{33}=\{(r,r_{4})\in B_{3}:r=1/4,d_{3}\le68,\sum_{i\neq1}1/d_{i}\geq1\}$.
\end{itemize}
\item Assume that $\min\{d_{i}\}_{i=1}^{8}=1$ and $r\neq0$.
\begin{itemize}
\item Since we assumed $r\neq0$, we have $r=r_{4}$ and $A_{7},A_{8}=\varnothing$.
If $d_{1}$ is even, we need $\sum_{i=1}^{6}1/d_{i}=9/d_{1}\geq1$;
if $d_{1}$ is odd, we need $\sum_{i=1}^{6}1/d_{i}=6/d_{1}\geq1$.
\item Let $B_{41}=\{(r,r_{4})\in\mathbb{Q}^{2}:0\leq r=r_{4}\leq1/2,d_{1}=2,3,4,5,6,8\}$.
\end{itemize}
\end{itemize}
Let $B=B_{11}\cup B_{12}\cup B_{13}\cup B_{21}\cup B_{22}\cup B_{23}\cup B_{24}\cup B_{25}\cup B_{31}\cup B_{32}\cup B_{33}\cup B_{41}$,
which is a finite set. If $S$ is not ``good'', then we have several
possibilities: $(r,r_{4})\in B$, $A_{1}\cup A_{2}\cup A_{4}=\mathbb{Z}$,
$A_{5}\cup A_{6}=\mathbb{Z}$, or $r=0$. In the following, whenever
we consider $(r,r_{4})\in B$, we assume that $A_{1}\cup A_{2}\cup A_{4}\neq\mathbb{Z}$
and $A_{5}\cup A_{6}\neq\mathbb{Z}$.

We claim that
\begin{itemize}
\item if $\gcd(d_{1},d_{3})=1$, then ($A_{1}=\varnothing$ or $A_{3}=\varnothing$)
and ($A_{2}=\varnothing$ or $A_{3}=\varnothing$);
\item if $\gcd(d_{1},d_{5})=1$, then ($A_{1}=\varnothing$ or $A_{5}=\varnothing$)
and ($A_{2}=\varnothing$ or $A_{6}=\varnothing$);
\item if $\gcd(d_{1},d_{7})=1$, then ($A_{1}=\varnothing$ or $A_{7}=\varnothing$)
and ($A_{2}=\varnothing$ or $A_{8}=\varnothing$);
\item if $\gcd(d_{3},d_{5})=1$, then ($A_{3}=\varnothing$ or $A_{5}=\varnothing$)
and ($A_{3}=\varnothing$ or $A_{6}=\varnothing$);
\item if $\gcd(d_{3},d_{7})=1$, then ($A_{3}=\varnothing$ or $A_{7}=\varnothing$)
and ($A_{3}=\varnothing$ or $A_{8}=\varnothing$);
\item if $\gcd(d_{5},d_{7})=1$, then ($A_{5}=\varnothing$ or $A_{7}=\varnothing$)
and ($A_{6}=\varnothing$ or $A_{8}=\varnothing$).
\end{itemize}
Let us prove the first one, others can be proved similarly. If $\gcd(d_{1},d_{3})=1$
and $A_{1},A_{3}\neq\varnothing$, then $A_{1}\cap A_{3}\neq\varnothing$,
i.e., there exists $c\in\mathbb{Z}$ such that $F(rc+\theta_{2})=F(r_{4}c+\theta_{4})=0$.
Since $\{\theta_{1},rc+\theta_{2},rc+\theta_{3},r_{4}c+\theta_{4}\}$
is not ``good'', if $\theta_{1}\neq0$, then $F(rc+\theta_{3})=0$,
which implies $\theta_{2}=\theta_{3}$, contradiction; if $\theta_{1}=0$,
then $r=0$, which implies $(0,\theta_{1})=(r,\theta_{2})=(0,0)$,
contradiction.

Based on this observation, we define
\begin{itemize}
\item $C_{1}=\{(r,r_{4})\in B:\gcd(d_{1},d_{3})=1,\sum_{i\neq3}1/d_{i}<1,\sum_{i\neq1,2}1/d_{i}<1\}$;
\item $C_{2}=\{(r,r_{4})\in B:\gcd(d_{1},d_{5})=1,\sum_{i\neq1,2}1/d_{i}<1,\sum_{i\neq5,6}1/d_{i}<1,\sum_{i\neq1,5}1/d_{i}<1\}$;
\item $C_{3}=\{(r,r_{4})\in B:\gcd(d_{1},d_{7})=1,\sum_{i\neq1,2}1/d_{i}<1,\sum_{i\neq7,8}1/d_{i}<1,\sum_{i\neq1,7}1/d_{i}<1\}$;
\item $C_{4}=\{(r,r_{4})\in B:\gcd(d_{3},d_{5})=1,\sum_{i\neq3}1/d_{i}<1,\sum_{i\neq5,6}1/d_{i}<1\}$;
\item $C_{5}=\{(r,r_{4})\in B:\gcd(d_{3},d_{7})=1,\sum_{i\neq3}1/d_{i}<1,\sum_{i\neq7,8}1/d_{i}<1\}$;
\item $C_{6}=\{(r,r_{4})\in B:\gcd(d_{5},d_{7})=1,\sum_{i\neq5,6}1/d_{i}<1,\sum_{i\neq7,8}1/d_{i}<1,\sum_{i\neq5,7}1/d_{i}<1\}$.
\end{itemize}
Let $C=B\backslash(C_{1}\cup C_{2}\cup C_{3}\cup C_{4}\cup C_{5}\cup C_{6})$.
If $S$ is not ``good'', then we have several possibilities: $(r,r_{4})\in C$,
$A_{1}\cup A_{2}\cup A_{4}=\mathbb{Z}$, $A_{5}\cup A_{6}=\mathbb{Z}$,
or $r=0$.

Assume that $(r,r_{4})\in C$ and $r\neq r_{4}$. If $\cup_{i=4}^{8}A_{i}\neq\mathbb{Z}$,
then $\cup_{i=1}^{3}A_{i}\neq\varnothing$. Since we assumed $\min\{d_{i}\}_{i=1}^{8}\geq2$
and $A_{5}\cup A_{6}\neq\mathbb{Z}$, any two of $\{A_{i}\}_{i=4}^{8}$
cannot cover $\mathbb{Z}$. If at least three of $\{A_{i}\}_{i=4}^{8}$
cover $\mathbb{Z}$, then $\theta_{2},\theta_{3},\theta_{4}\in(1/(2n))\mathbb{Z}/\mathbb{Z}$,
where $n=\textup{lcm}(d_{4},d_{5},d_{7})$. If $d_{1}=2n$ or $d_{3}=2n$,
then $\cup_{i=1}^{3}A_{i}\neq\varnothing$.

Assume that $(r,r_{4})\in C$ and $r=r_{4}$. If $d_{1}=5,8$, then
$\cup_{i=4}^{6}A_{i}\neq\mathbb{Z}$, so $\cup_{i=1}^{3}A_{i}\neq\varnothing$.
If $d_{1}=6$, then any two of $\{A_{i}\}_{i=4}^{6}$ cannot cover
$\mathbb{Z}$ and $d_{1}=2d_{4}$, so $\cup_{i=1}^{3}A_{i}\neq\varnothing$.
\begin{itemize}
\item Let $D_{1}=\{(r,r_{4})\in C:r\neq r_{4},\sum_{i=4}^{8}1/d_{i}<1$
or $(d_{5}=2,\sum_{i=4,6,7,8}1/d_{i}<1)$ or $d_{1}=2\textup{lcm}(d_{4},d_{5},d_{7})$
or $d_{3}=2\textup{lcm}(d_{4},d_{5},d_{7})\}\cup\{(r,r_{4})\in C:r=r_{4},d_{1}=5,6,8\}$.
\end{itemize}
Assume that $(r,r_{4})\in C$ and $\cup_{i=1}^{3}A_{i}\neq\varnothing$.
If
\[
\begin{aligned} & A_{1}\cup A_{2}\cup A_{4}\neq\mathbb{Z}, &  & A_{1}\cup A_{3}\cup A_{5}\cup A_{7}\neq\mathbb{Z}, &  & A_{2}\cup A_{3}\cup A_{6}\cup A_{8}\neq\mathbb{Z},\\
 & A_{3}\cup A_{4}\neq\mathbb{Z}, &  & A_{2}\cup A_{5}\cup A_{7}\neq\mathbb{Z}, &  & A_{1}\cup A_{6}\cup A_{8}\neq\mathbb{Z},
\end{aligned}
\]
then $\theta_{2},\theta_{3},\theta_{4}\in(1/n)\mathbb{Z}/\mathbb{Z}$,
where $n=\textup{lcm}(d_{1},d_{3})$. Since $\{\theta_{1},rc+\theta_{2},rc+\theta_{3},r_{4}c+\theta_{4}\}$
is not ``good'', if, for example, $F(rc+\theta_{2})=0$, then by
the minimality of $S$, $F(\theta_{1})=F(rc+\theta_{3})$ or $F(\theta_{1})=F(r_{4}c+\theta_{4})$,
which implies $\theta_{1}\in(1/n)\mathbb{Z}/\mathbb{Z}$ as well.
\begin{itemize}
\item Let $D_{2}=\{(r,r_{4})\in C:\sum_{i=1,3,5,7}1/d_{i}<1\textup{ or }(r=r_{4},\sum_{i=1,3,5}1/d_{i}<1),\sum_{i=3,4}1/d_{i}<1\}$.
\end{itemize}
We want to check that for any $(r,r_{4})\in D_{1}\cap D_{2}$, any
$\theta_{1},\theta_{2},\theta_{3},\theta_{4}\in(1/n)\mathbb{Z}/\mathbb{Z}$,
and any $a,b\in\mathbb{Z}$ such that $\gcd(a,b,n)=1$, where $n=\textup{lcm}(d_{1},d_{3})$,
whether $\{b\theta_{1},ar+b\theta_{2},ar+b\theta_{3},ar_{4}+b\theta_{4}\}$
is ``good'' or not. This is a finite calculation, so we can use
\textbf{computer programs} to do that. The answer is: if $S$ is not
``good'', then $(r,r_{4})\notin D_{1}\cap D_{2}$.

By the \textbf{computer programs},
\begin{align*}
 & D_{2}\backslash D_{1}=\{(1/9,2/9),(1/9,4/9),(1/6,5/12),(1/5,2/5),(1/5,7/15),(2/9,4/9),(1/4,3/8),(1/3,5/12)\},\\
 & D_{1}\backslash D_{2}=\{(1/12,5/12),(1/10,1/2),(1/8,3/8),(1/6,1/2),(3/10,1/2)\},\\
 & C\backslash(D_{1}\cup D_{2})=\{(1/6,1/3),(1/4,1/4),(1/4,1/2),(1/3,1/3),(1/3,1/2),(1/2,1/2)\}.
\end{align*}
If $S$ is not ``good'', then we have several possibilities: $(r,r_{4})\in C\backslash(D_{1}\cap D_{2})$,
$A_{1}\cup A_{2}\cup A_{4}=\mathbb{Z}$, $A_{5}\cup A_{6}=\mathbb{Z}$,
or $r=0$.

Assume that $(r,r_{4})\notin D_{1}$, $r\neq r_{4}$, and $\cup_{i=1}^{3}A_{i}=\varnothing$.
Then $\cup_{i=4}^{8}A_{i}=\mathbb{Z}$ and $\theta_{2},\theta_{3},\theta_{4}\in(1/(2n))\mathbb{Z}/\mathbb{Z}$,
where $n=\textup{lcm}(d_{4},d_{5},d_{7})$. Also, for any $c\in\mathbb{Z}$,
$\min\{F(\theta_{1}),F(rc+\theta_{2}),F(rc+\theta_{3}),F(r_{4}c+\theta_{4})\}=F(\theta_{1})$,
so the smallest two of $F(rc+\theta_{2})$, $F(rc+\theta_{3})$, and
$F(r_{4}c+\theta_{4})$ must be equal. We can use \textbf{computer
programs} to find all $(\theta_{2},\theta_{3},\theta_{4})$ that satisfy
these conditions. The answer is:
\begin{itemize}
\item If $(r,r_{4})=(1/5,2/5)$, then $(\theta_{2},\theta_{3},\theta_{4})=(1/10,3/10,9/10)$.
Let $(a,b)=(-1,2)$, then $\{2\theta_{1},0,2/5,2/5\}$ is not ``good'',
which implies $2/5\leq2\theta_{1}\leq3/5$, contradiction.
\item If $(r,r_{4})=(1/4,1/2)$, then $(\theta_{2},\theta_{3},\theta_{4})=(1/8,3/8,1/8)$.
Let $(a,b)=(1,2)$, then $\{2\theta_{1},1/2,0,1/4\}$ is not ``good'',
which implies $\theta_{1}=1/8$. This is the \textbf{case (\ref{case10})}
of Theorem \ref{thm1}. It is easy to check that it is indeed not
``good'' and minimal.
\item If $(r,r_{4})=(1/3,1/2)$, then $(\theta_{2},\theta_{3},\theta_{4})=(1/12,11/12,1/4)$.
Let $(a,b)=(1,4)$, then $\{4\theta_{1},1/3,0,1/2\}$ is not ``good'',
which implies $\theta_{1}=1/12$. This is the \textbf{case (\ref{case11})}
of Theorem \ref{thm1}. It is easy to check that it is indeed not
``good'' and minimal.
\item Otherwise, such $(\theta_{2},\theta_{3},\theta_{4})$ does not exist.
\end{itemize}
Assume that $(r,r_{4})\notin D_{2}$, $r\neq r_{4}$, and $\cup_{i=1}^{3}A_{i}\neq\varnothing$.
We note that $A_{1}\cup A_{2}\cup A_{4}\neq\mathbb{Z}$, $A_{2}\cup A_{5}\cup A_{7}\neq\mathbb{Z}$,
and $A_{1}\cup A_{6}\cup A_{8}\neq\mathbb{Z}$ are automatic for each
case. If $A_{3}\cup A_{4}=\mathbb{Z}$, then $(r,r_{4})=(1/4,1/2)$
and $(\theta_{2},\theta_{3},\theta_{4})\equiv(\theta_{2},1/2-\theta_{2},0)\textup{ mod }1$,
which implies $\theta_{1}\equiv\pm\theta_{2}\textup{ mod }1$ or $\theta_{1}\equiv1/2\pm\theta_{2}\textup{ mod }1$.
Without loss of generality, we can assume that $\theta_{1}=\theta_{2}$.
Let $(a,b)=(1,2)$, then $\{2\theta_{1},1/4+2\theta_{1},1/4-2\theta_{1},1/2\}$
is not ``good'', which implies $4\theta_{1}\equiv0,1/4,3/4\textup{ mod }1$.
By the minimality of $S$, $\theta_{1}=1/16$. However, when $\theta_{1}=1/16$,
$\{1/8,3/8,1/8,1/2\}$ is ``good'', so $A_{3}\cup A_{4}=\mathbb{Z}$
cannot happen. If, for example, $A_{1}\cup A_{3}\cup A_{5}\cup A_{7}=\mathbb{Z}$,
then $\theta_{2},\theta_{4}\in(1/n)\mathbb{Z}/\mathbb{Z}$, where
$n=\textup{lcm}(d_{1},d_{3})$. We can use \textbf{computer programs}
to find all $(\theta_{2},\theta_{4})$ that satisfy this condition.
The answer is:
\begin{itemize}
\item If $(r,r_{4})=(1/8,3/8)$, then $(\theta_{2},\theta_{4})=(0,1/2)$.
Let $(a,b)=(0,1)$, then $\{\theta_{1},0,\theta_{3},1/2\}$ is not
``good'', which implies $\theta_{1}\equiv\pm\theta_{3}\textup{ mod }1$.
Let $(a,b)=(4,1)$, then $\{\theta_{1},1/2,1/2+\theta_{3},0\}$ is
not ``good'', which implies $\theta_{1}\equiv1/2\pm\theta_{3}\textup{ mod }1$.
If, for example, $\theta_{1}\equiv\theta_{3}\equiv1/2-\theta_{3}\textup{ mod }1$,
then $\theta_{1}=1/4$, contradiction.
\item If $(r,r_{4})=(1/4,1/2)$, then $(\theta_{2},\theta_{4})=(0,1/2)$.
Let $(a,b)=(2,1)$, then $\{\theta_{1},1/2,1/2+\theta_{3},1/2\}$
is not ``good'', which implies $\theta_{1}=1/2$ or $\theta_{3}=0$,
contradiction.
\item Otherwise, such $(\theta_{2},\theta_{4})$ does not exist.
\end{itemize}
As the case $(r,r_{4})\in D_{1}\cap D_{2}$ before, we can use \textbf{computer
programs} to check that for any $(r,r_{4})\notin D_{1}\cap D_{2}$
with $r\neq r_{4}$, any $\theta_{1},\theta_{2},\theta_{3},\theta_{4}\in(1/n)\mathbb{Z}/\mathbb{Z}$,
and any $a,b\in\mathbb{Z}$ such that $\gcd(a,b,n)=1$, where $n=\textup{lcm}(d_{1},d_{3})$,
whether $\{b\theta_{1},ar+b\theta_{2},ar+b\theta_{3},ar_{4}+b\theta_{4}\}$
is ``good'' or not. The answer is: if $S$ is not ``good'', then
$S$ is the \textbf{case (\ref{case6})} or \textbf{case (\ref{case7})}
of Theorem \ref{thm1}.

Assume that $r=r_{4}=1/4$ or $1/3$. Without loss of generality,
we also assume that $A_{1}\cup A_{3}\cup A_{5}\neq\mathbb{Z}$, $A_{2}\cup A_{3}\cup A_{6}\neq\mathbb{Z}$,
$A_{4}\cup A_{5}\neq\mathbb{Z}$, and $A_{4}\cup A_{6}\neq\mathbb{Z}$.
If $\cup_{i=1}^{3}A_{i}=\varnothing$, we must have $r=r_{4}=1/3$
and $\{\theta_{2}+\theta_{3},\theta_{2}+\theta_{4},\theta_{3}+\theta_{4}\}\equiv\{0,1/3,2/3\}\textup{ mod }1$,
which implies $(\theta_{2},\theta_{3},\theta_{4})=(1/6,1/2,5/6)$.
Let $(a,b)=(1,2)$, then $\{2\theta_{1},1/3,1/3,0\}$ is not ``good'',
which implies $\theta_{1}=1/6$. This is the \textbf{case (\ref{case9})}
of Theorem \ref{thm1}. It is easy to check that it is indeed not
``good'' and minimal. We note that $A_{3}\cup A_{4}\neq\mathbb{Z}$,
$A_{2}\cup A_{5}\neq\mathbb{Z}$, and $A_{1}\cup A_{6}\neq\mathbb{Z}$
are automatic for each case, so if $\cup_{i=1}^{3}A_{i}\neq\varnothing$,
then $\theta_{1},\theta_{2},\theta_{3},\theta_{4}\in(1/d_{1})\mathbb{Z}/\mathbb{Z}$,
which implies $S$ is the \textbf{case (\ref{case3})} or \textbf{case
(\ref{case4})} of Theorem \ref{thm1}. It is easy to check that both
of them are indeed not ``good'' and minimal.

Assume that $r=r_{4}=1/2$ and $0\leq\theta_{2}<\theta_{3}<\theta_{4}\leq1/2$.
Let $(a,b)=(0,1)$, then $\{\theta_{1},\theta_{2},\theta_{3},\theta_{4}\}$
is not ``good'', which implies $\theta_{1}=\theta_{3}$. Let $(a,b)=(1,1)$,
then $\{\theta_{1},1/2+\theta_{2},1/2+\theta_{1},1/2+\theta_{4}\}$
is not ``good'', which implies $\theta_{1}=1/4$. By the minimality
of $S$, $\textup{ord}((1/2,\theta_{2})),\textup{ord}((1/2,\theta_{4}))\leq4$,
so $\theta_{2}=0$ and $\theta_{4}=1/2$. This is the \textbf{case
(\ref{case5})} of Theorem \ref{thm1}. It is easy to check that it
is indeed not ``good'' and minimal.

Assume that $A_{1}\cup A_{2}\cup A_{4}=\mathbb{Z}$ and $r=1/4$.
Then $(\theta_{2},\theta_{3})=(0,1/2)$. Let $(a,b)=(2c,1)$, then
$\{\theta_{1},0,1/2,2cr_{4}+\theta_{4}\}$ is not ``good'', which
implies $\theta_{1}\equiv\pm(2cr_{4}+\theta_{4})\textup{ mod }1$
for any $c\in\mathbb{Z}$. Now $\theta_{1}\equiv2cr_{4}+\theta_{4}\equiv2c'r_{4}+\theta_{4}\textup{ mod }1$
or $\theta_{1}\equiv-2cr_{4}-\theta_{4}\equiv-2c'r_{4}-\theta_{4}\textup{ mod }1$
for some $0\leq c<c'\leq2$, which implies $r_{4}=1/4$ or $1/2$.
If $r_{4}=1/4$, then $\theta_{4}\equiv-2r_{4}-\theta_{4}\textup{ mod }1$,
which implies $\theta_{4}=1/4$ or $3/4$, and therefore $\theta_{1}=1/4$.
These two cases are clearly equivalent and have been found previously.
If $r_{4}=1/2$, then $\theta_{1}=\theta_{4}$. By the minimality
of $S$, $\theta_{1}\in\{1/(2R):R\in\mathbb{Z},R\geq2\}$. This is
the \textbf{case (\ref{case2})} of Theorem \ref{thm1}. Any element
within its $\textup{SL}_{2}(\mathbb{Z})$-orbit must be of the form
\begin{align*}
 & \{(r,\theta),(r,\theta+1/2),(0,1/4),(1/2,1/4)\},\\
 & \{(r,\theta),(r+1/2,\theta),(1/4,0),(1/4,1/2)\},\\
 & \{(r,\theta),(r+1/2,\theta+1/2),(1/4,1/4),(1/4,3/4)\},
\end{align*}
so it is indeed not ``good''. If $S$ is not minimal, then the minimal
element must be $\{(0,\theta_{1}),(0,1/4),(0,1/2-\theta_{1}),(1/2,1/4)\}$.
Therefore, there exist $a,b\in\mathbb{Z}$ such that $\{a/4,a/4+b/2\}\equiv\{0,1/2\}\textup{ mod }1$,
which implies $b$ is odd, and $\{b\theta_{1},a/2+b\theta_{1}\}\equiv\{0,0\}\textup{ mod }1$,
which implies $b$ is even, contradiction.

Assume that $A_{1}\cup A_{2}\cup A_{4}=\mathbb{Z}$ and $r=1/3$.
Then $(\theta_{2},\theta_{3})=(0,1/3)$ and $\theta_{1}\leq1/3$.
If $\theta_{1}=1/3$, then by the minimality of $S$, $(r_{4},\theta_{4})=(1/3,2/3)$,
$(1/2,0)$, or $(1/2,1/2)$. The first case has been found previously.
Let $(a,b)=(0,1)$ for the second and  $(a,b)=(-1,1)$ for the third,
contradiction. Now we assume that $\theta_{1}<1/3$. Let $(a,b)=(3c,1)$,
then $\{\theta_{1},0,1/3,3cr_{4}+\theta_{4}\}$ is not ``good'',
which implies $\theta_{1}\equiv\pm(3cr_{4}+\theta_{4})\textup{ mod }1$
for any $c\in\mathbb{Z}$. Now $\theta_{1}\equiv3cr_{4}+\theta_{4}\equiv3c'r_{4}+\theta_{4}\textup{ mod }1$
or $\theta_{1}\equiv-3cr_{4}-\theta_{4}\equiv-3c'r_{4}-\theta_{4}\textup{ mod }1$
for some $0\leq c<c'\leq2$, which implies $r_{4}=1/3$ or $1/2$.
If $r_{4}=1/2$, then $\theta_{4}\equiv-3r_{4}-\theta_{4}\textup{ mod }1$,
which implies $\theta_{4}=1/4$ and also $\theta_{1}=1/4$. Let $(a,b)=(1,2)$,
contradiction. If $r_{4}=1/3$, then $\theta_{1}\equiv\pm\theta_{4}\textup{ mod }1$.
Let $(a,b)=(-1,1)$, then $\{\theta_{1},1/3,0,\theta_{4}-1/3\}$ is
not ``good'', which implies $\theta_{1}\equiv\pm(\theta_{4}-1/3)\textup{ mod }1$.
If $\theta_{1}\equiv-\theta_{4}\equiv\theta_{4}-1/3\textup{ mod }1$,
then $(\theta_{1},\theta_{4})=(1/3,2/3)$, which has been found previously.
If $\theta_{1}\equiv\theta_{4}\equiv-\theta_{4}+1/3\textup{ mod }1$,
then $(\theta_{1},\theta_{4})=(1/6,1/6)$. This is the \textbf{case
(\ref{case8})} of Theorem \ref{thm1}. It is easy to check that it
is indeed not ``good'' and minimal.

Assume that $A_{5}\cup A_{6}=\mathbb{Z}$, and $r\neq0$ or $1/2$.
Then $r+r_{4}=1/2$ and $(\theta_{3},\theta_{4})\equiv(\theta_{2}+1/2,-\theta_{2})\textup{ mod }1$.
Let $(a,b)=(2c,1)$, then $\{\theta_{1},2cr+\theta_{2},2cr+\theta_{2}+1/2,2cr+\theta_{2}\}$
is not ``good''. If $2cr+\theta_{2}\equiv0$ or $1/2\textup{ mod }1$,
then $\theta_{1}=0$ or $1/2$, contradiction. By the minimality of
$S$, $\min\{F(\theta_{1}),F(2cr+\theta_{2}),F(2cr+\theta_{2}+1/2),F(2cr+\theta_{2})\}=F(\theta_{1})$,
which implies $0<F(2cr+\theta_{2})\leq1/4$ for any $c\in\mathbb{Z}$.
Since $0<2r\leq1/2$, we must have $F(\theta_{2})=1/4$ and also $r=r_{4}=1/4$,
then $(r,\theta_{3})=(r_{4},\theta_{4})$, contradiction.

Assume that $r=0$ and $0\leq\theta_{1}<\theta_{2}<\theta_{3}\leq1/2$.
Clearly, $r_{4}\neq0$. Let $(a,b)=(c,1)$, then $\{\theta_{1},\theta_{2},\theta_{3},cr_{4}+\theta_{4}\}$
is not ``good'', which implies $\theta_{2}\equiv\pm(cr_{4}+\theta_{4})\textup{ mod }1$
for any $c\in\mathbb{Z}$. Now $\theta_{2}\equiv cr_{4}+\theta_{4}\equiv c'r_{4}+\theta_{4}\textup{ mod }1$
or $\theta_{2}\equiv-cr_{4}-\theta_{4}\equiv-c'r_{4}-\theta_{4}\textup{ mod }1$
for some $0\leq c<c'\leq2$. Since $r_{4}\neq0$, we have $c=0$,
$c'=2$, and $r_{4}=1/2$. Since $\theta_{4}\equiv-r_{4}-\theta_{4}\textup{ mod }1$,
we have $\theta_{4}=1/4$ and also $\theta_{2}=1/4$. Let $(a,b)=(1,2)$,
then $\{2\theta_{1},1/2,2\theta_{3},0\}$ is not ``good'', which
implies $\theta_{1}+\theta_{3}=1/2$. By the minimality of $S$, $\theta_{1}\in\{0\}\cup\{1/(2R):R\in\mathbb{Z},R\geq3\}$.
This is the \textbf{case (\ref{case1})} of Theorem \ref{thm1}. As
the case $A_{1}\cup A_{2}\cup A_{4}=\mathbb{Z}$ and $r=1/4$ before,
it is indeed not ``good'' and minimal.
\end{proof}

\section{Proof of Theorem \texorpdfstring{\ref{thm1}}{1}: Step (III)}
\begin{thm}
Let $\{(r_{i},\theta_{i})\}_{i=1}^{4}\in\mathcal{R}_{4}$ be one of
the quadruples listed in Theorem \ref{thm1}, $n$ their common order,
$\{w_{1},w_{2}\}$ a basis of $E[n]$. Write $E[r_{i},\theta_{i}]=[nr_{i}]w_{1}+[n\theta_{i}]w_{2}$.
Then whenever $\{E[r_{i},\theta_{i}]\}_{i=1}^{4}$ is well-defined,
i.e.,
\begin{itemize}
\item $\textup{char}(K)\neq2$ for the cases (\ref{case1}), (\ref{case2}),
and (\ref{case5}),
\item $\textup{char}(K)\nmid n$ for the cases (\ref{case3}), (\ref{case4}),
(\ref{case6}), (\ref{case7}), (\ref{case10}), and (\ref{case11}),
\item $\textup{char}(K)\neq3$ and there exists a point of order $2$ (i.e.,
$E$ is ordinary if $\textup{char}(K)=2$) for the cases (\ref{case8})
and (\ref{case9}),
\end{itemize}
the image of it under $\pi$ inside $\mathfrak{S}_{4}\backslash(\mathbb{P}^{1})^{4}/\textup{PGL}_{2}$
is a constant.
\end{thm}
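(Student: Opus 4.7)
The plan is to split the eleven cases into three groups by the nature of the argument, then descend from characteristic zero to positive characteristic by algebraic identity.

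\textbf{Involution argument (cases (\ref{case1}), (\ref{case2}), (\ref{case5}); constant $27/4$).} In each of these cases I exhibit a $2$-torsion point $T \in E$ such that the quadruple $\{E[r_i,\theta_i]\}_{i=1}^{4}$ is invariant, modulo $[\pm 1]$, under translation by $T$: concretely $T = (n/2)w_2$ in (\ref{case1}), $T = (n/2)w_1$ in (\ref{case2}), and $T = 2w_2$ in (\ref{case5}), where $n$ denotes the common order. Since $2T = 0$, translation by $T$ commutes with $[-1]$ and descends to a Möbius involution $\tau_T$ on $\mathbb{P}^1 \cong E/[\pm 1]$. A direct check shows that $\tau_T$ fixes two of the four $\pi$-images -- precisely the two lying above $\pi(T)$, which are necessarily $4$-torsion $x$-coordinates (this uses $3P=-P$ for $4$-torsion $P$ with $2P=T$) -- and swaps the remaining two. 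In coordinates where the fixed points are $0$ and $\infty$, $\tau_T$ becomes $z \mapsto -z$, the configuration is $\{0, \infty, z_0, -z_0\}$ with cross ratio $2$, and the $\mathfrak{S}_4$-invariant $j_4 := (z^2-z+1)^3/(z^2(z-1)^2)$ equals $27/4$ independently of $E$. The same $T$ works uniformly across the infinite families (\ref{case1}) and (\ref{case2}).

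\textbf{Division-polynomial argument (case (\ref{case3}); constant $0$).} Here $\{E[r_i,\theta_i]\}_{i=1}^{4} = E^{*}[3]$, so the four $\pi$-images are the four roots of the $3$-division polynomial $\psi_3(x) = 3x^4 + 6ax^2 + 12bx - a^2$ on $E\colon y^2 = x^3 + ax + b$. The $\textup{SL}_2$-invariant $I = 12AE - 3BD + C^2$ of a binary quartic $Ax^4 + Bx^3 + Cx^2 + Dx + E$ evaluates to $12 \cdot 3 \cdot (-a^2) - 0 + (6a)^2 = 0$ on $\psi_3$. Since vanishing of $I$ characterises the equianharmonic tetrad, $j_4 = 0$ uniformly in $(a,b)$.

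\textbf{Modular-curve argument (cases (\ref{case4}), (\ref{case6})--(\ref{case11})).} Here I realize $j_4$ as a function on the modular curve $X(\Gamma)$ parametrizing $(E, w_1, w_2)$ at the level $\Gamma \leq \textup{SL}_2(\mathbb{Z})$ stabilizing the quadruple, and I invoke compactness. First, the four $\pi$-images are always pairwise distinct: by the very definition of $\mathcal{R}$, the coordinates $(r_i,\theta_i)$ are pairwise distinct in the fundamental domain of $\pm\backslash\mathbb{Q}^2/\mathbb{Z}^2$, so the $E[r_i,\theta_i]$'s are pairwise inequivalent modulo $[\pm 1]$, which is precisely distinctness of their $\pi$-images. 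Thus $j_4$ has no pole on the interior of $X(\Gamma)$. At each cusp, the leading-term analysis of Lemma \ref{lem4}/Corollary \ref{cor5} applied to the relevant $\textup{SL}_2(\mathbb{Z})$-translate of the quadruple (which is still ``not good'' by Lemma \ref{lem6}) shows that the four leading $q$-terms are pairwise distinct, whence distinctness of $\pi$-images persists in a punctured neighborhood of the cusp and the cross ratio has a finite limit there. Therefore $j_4$ is a regular function on the compact connected Riemann surface $X(\Gamma)$, hence constant; its value is read off the leading term of its $q$-expansion at $q = 0$. The identity ``$j_4 = c$'' is an algebraic identity in the Weierstrass coefficients and the torsion parameters, so it descends to any algebraically closed field $K$ in which these are defined, yielding precisely the characteristic restrictions stated (the ``ordinary if $\textup{char}(K) = 2$'' hypothesis for (\ref{case8}), (\ref{case9}) being exactly what is needed for a $2$-torsion point to exist, and hence for the $E[6]$-basis to make sense).

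\textbf{Main obstacle.} The technically delicate point lies in the modular-curve argument: showing that $j_4$ has no pole at any cusp of $X(\Gamma)$. A clean way is to exploit the $\textup{SL}_2(\mathbb{Z})$-invariance of the ``not good'' condition, so that distinctness of leading $q$-terms at $q = 0$ for every $\textup{SL}_2(\mathbb{Z})$-translate of the quadruple -- supplied by Lemma \ref{lem4} -- immediately propagates to distinctness of the four $\pi$-images at every cusp of the original $X(\Gamma)$.
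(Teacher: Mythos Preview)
Your involution argument for cases (\ref{case1}), (\ref{case2}), (\ref{case5}) is essentially the paper's, phrased model-independently; the paper works in the Jacobian form $y^2=x^4-(\delta^2+\delta^{-2})x^2+1$ and observes that translation by a fixed $2$-torsion point sends $\pi_\delta(P)\mapsto -\pi_\delta(P)$ while fixing $0,\infty\in\pi_\delta(E^*[4])$. Your treatment of case (\ref{case3}) via the vanishing of the binary-quartic invariant $I$ on $\psi_3$ is correct and is a nice alternative to the paper's one-line appeal to the Hessian model.

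For the remaining cases your approach diverges sharply from the paper's. The paper does \emph{not} use a compactness/modular-curve argument at all: it computes each cross ratio explicitly, using the Jacobian model for (\ref{case4}), (\ref{case6}), (\ref{case7}), (\ref{case10}) and the Hessian model for (\ref{case8}), (\ref{case9}), (\ref{case11}), obtaining the constants $-i$, $\omega$, $-\omega^2$, etc.\ by direct algebraic manipulation. This is less conceptual than your proposal but yields the numerical constants for free and works uniformly over any $K$ in which the chosen model exists.

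Your modular-curve argument is an attractive alternative, but the cusp analysis has a genuine gap. You write that distinctness of the four leading $q$-terms ``propagates to distinctness of the four $\pi$-images at every cusp'' and hence the cross ratio has a finite limit. This inference fails: distinct leading terms only give distinctness for small $q\neq 0$, not at $q=0$. For instance in case (\ref{case4}) the three points with $r_i=1/4$ all tend to $0$ as $q\to 0$, so the configuration degenerates and one must check that the cross ratio nevertheless avoids $\{0,1,\infty\}$ in the limit. The repair is to use the ``not good'' condition for what it actually says: at every cusp the folded $r$-values satisfy $r_{(2)}=r_{(3)}$, so the exponent $r_{(3)}-r_{(2)}$ in Lemma~\ref{lem4}'s expansion of the cross ratio $z$ is zero, giving $z\to c\neq 0,\infty$; the same degree count applied to $1-z=\dfrac{(X_1-X_3)(X_4-X_2)}{(X_1-X_4)(X_3-X_2)}$ shows its leading exponent is also $r_{(1)}+r_{(2)}-r_{(1)}-r_{(2)}=0$, ruling out $c=1$. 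With this addition your holomorphicity claim goes through and the argument is complete over $\mathbb{C}$; the descent to positive characteristic by specialization is then routine, though you should note that reading off the actual constants ($1/2$, $8/3$, etc.) still requires computing that leading coefficient $c$ at one cusp per case.
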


\begin{proof}
Let us first collect some basic facts that we will need for the Jacobian
form and Hessian form.

If $\textup{char}(K)\neq2$, then any $E/K$ can be transformed to
the Jacobian form
\[
E_{\delta}:y^{2}=x^{4}-(\delta^{2}+\delta^{-2})x^{2}+1,
\]
where $\delta^{4}\neq0,1$. We take the origin to be $O_{\delta}=(\delta,0)$
and $\pi_{\delta}:E_{\delta}\rightarrow\mathbb{P}^{1},(x,y)\mapsto x$.
Its $2$-torsion points are $(\pm\delta^{\pm1},0)$, which induce
\[
\pi_{\delta}(P+(\pm\delta^{\pm1},0))=\pm\pi_{\delta}(P)^{\pm1}.
\]
The fixed points of those nontrivial ones $\{0,\infty\}$, $\{\pm1\},$
and $\{\pm i\}$ constitute $\pi_{\delta}(E_{\delta}^{*}[4])$, where
$i$ is a primitive fourth root of $K$. Moreover,
\[
\pi_{\delta}([2]P)=-\frac{\delta^{2}-2\pi_{\delta}(P)^{2}+\delta^{2}\pi_{\delta}(P)^{4}}{\delta(1-2\delta^{2}\pi_{\delta}(P)^{2}+\pi_{\delta}(P)^{4})}.
\]

If $\textup{char}(K)\neq3$, then any $E/K$ can be transformed to
the Hessian form
\[
E_{\lambda}:x^{3}+y^{3}+z^{3}=3\lambda xyz,
\]
where $\lambda^{3}\neq1$. We take the origin to be $O_{\lambda}=(1:-1:0)$
and $\pi_{\lambda}:E_{\lambda}\rightarrow\mathbb{P}^{1},(x:y:z)\mapsto-(x+y)/z$.
Its $3$-torsion points are
\[
\begin{array}{ccc}
(1:-1:0), & (1:-\omega:0), & (1:-\omega^{2}:0),\\
(0:1:-1), & (0:1:-\omega), & (0:1:-\omega^{2}),\\
(-1:0:1), & (-\omega:0:1), & (-\omega^{2}:0:1),
\end{array}
\]
where $\omega$ is a primitive cube root of $K$. The addition formula
is
\[
(x_{1}:y_{1}:z_{1})+(x_{2}:y_{2}:z_{2})=(y_{1}^{2}x_{2}z_{2}-y_{2}^{2}x_{1}z_{1}:x_{1}^{2}y_{2}z_{2}-x_{2}^{2}y_{1}z_{1}:z_{1}^{2}x_{2}y_{2}-z_{2}^{2}x_{1}y_{1}).
\]
The doubling formula is
\[
[2](x:y:z)=(y(x^{3}-z^{3}):x(z^{3}-y^{3}):z(y^{3}-x^{3})).
\]

We use the Jacobian form for the cases (\ref{case1}), (\ref{case2}),
and (\ref{case5}).\\
Let $Q\in E_{\delta}^{*}[2]$, $Q_{1},Q_{2}\in E_{\delta}^{*}[4]$
such that $[2]Q_{1}=[2]Q_{2}=Q$ and $Q_{1}+Q_{2}\neq O_{\delta}$,
and $P\in E_{\delta}[\infty]$ such that $P\neq Q_{1},Q_{2}$. Then
$\pi_{\delta}$ maps $(P,P+Q,Q_{1},Q_{2})$ to $(\pi_{\delta}(P),-\pi_{\delta}(P),0,\infty)$,
whose cross ratio is a constant.

We use the Jacobian form for the cases (\ref{case4}) and (\ref{case10}).\\
The case (\ref{case4}) is done. Now we deal with the case (\ref{case10}).
Fix a basis $\{w_{1},w_{2}\}$ of $E_{\delta}[8]$ such that $[4]w_{2}=(-\delta,0)$
and $[4]w_{1}=(\delta^{-1},0)$. Let $a=\pi_{\delta}(w_{2})$ and
$b=\pi_{\delta}([2]w_{1}+w_{2})$, then $\pi_{\delta}([2]w_{1}+[3]w_{2})=-b^{-1}$
and $\pi_{\delta}([4]w_{1}+w_{2})=a^{-1}$. The cross ratio of $(a,b,-b^{-1},a^{-1})$
is
\[
\frac{(a-b)(a^{-1}+b^{-1})}{(a+b^{-1})(a^{-1}-b)}=\frac{a^{2}-b^{2}}{1-a^{2}b^{2}}.
\]
Let $\pi_{\delta}([2]w_{2})=0$ and $\pi_{\delta}([4]w_{1}+[2]w_{2})=\infty$,
then we know that
\begin{eqnarray*}
\delta^{2}-2a^{2}+\delta^{2}a^{4}=0 & \Rightarrow & a^{2}=\delta^{-2}\pm\sqrt{\delta^{-4}-1}=\delta^{-2}\pm i\delta^{-2}\sqrt{\delta^{4}-1},\\
1-2\delta^{2}b^{2}+b^{4}=0 & \Rightarrow & b^{2}=\delta^{2}\pm\sqrt{\delta^{4}-1}.
\end{eqnarray*}
If, for example, taking ``$+$'' for both, then the cross ratio
\begin{eqnarray*}
\frac{a^{2}-b^{2}}{1-a^{2}b^{2}} & = & \frac{(\delta^{-2}+i\delta^{-2}\sqrt{\delta^{4}-1})-(\delta^{2}+\sqrt{\delta^{4}-1})}{1-(\delta^{-2}+i\delta^{-2}\sqrt{\delta^{4}-1})(\delta^{2}+\sqrt{\delta^{4}-1})}\\
 & = & \frac{\delta^{-2}+i\delta^{-2}\sqrt{s^{4}-1}-\delta^{2}-\sqrt{\delta^{4}-1}}{1-1-\delta^{-2}\sqrt{\delta^{4}-1}-i\sqrt{\delta^{4}-1}-i\delta^{-2}(\delta^{4}-1)}\\
 & = & \frac{(\delta^{-2}-\delta^{2})+(i\delta^{-2}-1)\sqrt{\delta^{4}-1}}{(i\delta^{-2}-i\delta^{2})+(-\delta^{-2}-i)\sqrt{\delta^{4}-1}}\\
 & = & -i
\end{eqnarray*}
is a constant.

We use the Hessian form for the cases (\ref{case3}), (\ref{case8}),
and (\ref{case9}).\\
The case (\ref{case3}) is done. Now we deal with the cases (\ref{case8})
and (\ref{case9}). Let $Q=(a:a:1)\in E_{\lambda}^{*}[2]$, $P_{0}=(1:-\omega:0)$,
$P_{1}=(-1:0:1)$, $P_{2}=(-\omega:0:1)$, and $P_{3}=(-\omega^{2}:0:1)$.
Then $\pi_{\lambda}$ maps $(P_{0},P_{1},P_{0}+Q,P_{1}+Q,P_{2}+Q,P_{3}+Q)$
to $(\infty,1,a,-1-a^{-1},-\omega-\omega^{2}a^{-1},-\omega^{2}-\omega a^{-1})$.
The cross ratio of $(\infty,1,-\omega-\omega^{2}a^{-1},-\omega^{2}-\omega a^{-1})$
is
\[
\frac{1+\omega+\omega^{2}a^{-1}}{1+\omega^{2}+\omega a^{-1}}=\frac{-\omega^{2}+\omega^{2}a^{-1}}{-\omega+\omega a^{-1}}=\omega.
\]
The cross ratio of $(a,-1-a^{-1},-\omega-\omega^{2}a^{-1},-\omega^{2}-\omega a^{-1})$
is
\begin{eqnarray*}
 &  & \frac{(a+\omega+\omega^{2}a^{-1})(-1-a^{-1}+\omega^{2}+\omega a^{-1})}{(a+\omega^{2}+\omega a^{-1})(-1-a^{-1}+\omega+\omega^{2}a^{-1})}\\
 & = & \frac{(a+\omega+\omega^{2}a^{-1})(a^{-1}-\omega^{2})(\omega-1)}{(a+\omega^{2}+\omega a^{-1})(a^{-1}-\omega)(\omega^{2}-1)}\\
 & = & \frac{\omega^{2}(a^{-2}-a)(\omega-1)}{\omega(a^{-2}-a)(\omega^{2}-1)}\\
 & = & -\omega^{2}.
\end{eqnarray*}
Both of them are constants.

We use the Jacobian form for the cases (\ref{case6}) and (\ref{case7}).\\
Let $P_{0},P_{1},P_{2},P_{3}\in E_{\delta}^{*}[3]$ such that $P_{i}+P_{j}\neq O_{\delta}$,
and $a,b,c,d$ their images under $\pi_{\delta}$. We have already
known that the cross ratio of $(a,b,-c,-d)$ is a constant, so the
cross ratio of $(a^{-1},b^{-1},-c^{-1},-d^{-1})$ is the same constant.
The cross ratio of $(a,-b,c^{-1},-d^{-1})$ is
\[
\frac{(a+b)(c^{-1}+d^{-1})}{(a+d^{-1})(c^{-1}+b)}=\frac{(ad+bc)+(ac+bd)}{(ad+bc)+(abcd+1)}.
\]
We know that (see, for example, \cite[Proof of Corollary 3.6 (C)]{MR3536148})
\[
(x-a)(x-b)(x-c)(x-d)=x^{4}+2\delta x^{3}-2\delta^{-1}x-1,
\]
which implies
\[
(x-(ab+cd))(x-(ac+bd))(x-(ad+bc))=x^{3}+4(\delta^{2}+\delta^{-2}),
\]
so $abcd=-1$ and $\dfrac{ac+bd}{ad+bc}=\omega$. Therefore,
\[
\frac{(ad+bc)+(ac+bd)}{(ad+bc)+(abcd+1)}=-\omega^{2}
\]
is a constant.

We use the Hessian form for the case (\ref{case11}).\\
Let $P_{1}=(-1:0:1)$, $P_{2}=(-\omega:0:1)$, $P_{3}=(-\omega^{2}:0:1)$,
$\{v_{1},v_{2}\}$ a basis of $E_{\lambda}[4]$, $v_{2}=(a_{1}:b_{1}:1)$,
$[2]v_{1}+v_{2}=(a_{2}:b_{2}:1)$, and $[2]v_{2}=(c:c:1)$. Then $\pi_{\lambda}$
maps $(P_{1}+v_{2},P_{2}+v_{2},P_{3}+v_{2},[2]v_{1}+v_{2})$ to $(-a_{1}^{-1}(b_{1}+1),-a_{1}^{-1}(\omega b_{1}+\omega^{2}),-a_{1}^{-1}(\omega^{2}b_{1}+\omega),-(a_{2}+b_{2}))$,
whose cross ratio is
\begin{eqnarray*}
 &  & \frac{(a_{1}^{-1}(b_{1}+1)-a_{1}^{-1}(\omega b_{1}+\omega^{2}))(a_{2}+b_{2}-a_{1}^{-1}(\omega^{2}b_{1}+\omega))}{(a_{1}^{-1}(b_{1}+1)-a_{1}^{-1}(\omega^{2}b_{1}+\omega))(a_{2}+b_{2}-a_{1}^{-1}(\omega b_{1}+\omega^{2}))}\\
 & = & \frac{(1-\omega)(b_{1}-\omega^{2})(a_{1}(a_{2}+b_{2})-(\omega^{2}b_{1}+\omega))}{(1-\omega^{2})(b_{1}-\omega)(a_{1}(a_{2}+b_{2})-(\omega b_{1}+\omega^{2}))}\\
 & = & \frac{(1-\omega)(a_{1}(b_{1}-\omega^{2})(a_{2}+b_{2})-\omega^{2}b_{1}^{2}+1)}{(1-\omega^{2})(a_{1}(b_{1}-\omega)(a_{2}+b_{2})-\omega b_{1}^{2}+1)}\\
 & = & \frac{(1-\omega)(-\omega^{2}(a_{1}a_{2}+a_{1}b_{2}-b_{1}^{2})+a_{1}b_{1}(a_{2}+b_{2})+1)}{(1-\omega^{2})(-\omega(a_{1}a_{2}+a_{1}b_{2}-b_{1}^{2})+a_{1}b_{1}(a_{2}+b_{2})+1)}\\
 & = & -\omega^{2},
\end{eqnarray*}
if we have $a_{1}b_{1}(a_{2}+b_{2})=-1$. Now let us check this is
true. From the doubling formula, we know that $(a_{1},b_{1})$, $(b_{1},a_{1})$,
$(a_{2},b_{2})$, and $(b_{2},a_{2})$ are the solutions of $\left(\dfrac{y(x^{3}-1)}{y^{3}-x^{3}},\dfrac{x(1-y^{3})}{y^{3}-x^{3}}\right)=(c,c)$.
From the second coordinate, we get $y^{3}=\dfrac{cx^{3}+x}{x+c}$.
Substituting it into the first coordinate, we get $y=-\dfrac{cx}{x+c}$,
i.e., $x^{-1}+y^{-1}=-c^{-1}$. Thus we have $\dfrac{cx^{3}+x}{x+c}=y^{3}=\left(-\dfrac{cx}{x+c}\right)^{3}$,
which can be simplified to $cx^{4}+2c^{2}x^{3}+(2c^{3}+1)x^{2}+2cx+c^{2}=0,$
so $a_{1}b_{1}a_{2}b_{2}=c$. Therefore, $a_{1}b_{1}(a_{2}+b_{2})=a_{1}b_{1}a_{2}b_{2}(a_{2}^{-1}+b_{2}^{-1})=-1$.
\end{proof}

\section{A Corollary of Theorem \texorpdfstring{\ref{thm1}}{1}}
\begin{thm}
\textup{\label{thm8}\cite[Page 412, Proposition 1.3]{MR1312368}}
Define a normalized theta function $\Theta(u,q)$ by the formula
\[
\Theta(u,q)=(1-u)\prod_{n\geq1}\frac{(1-q^{n}u)(1-q^{n}u^{-1})}{(1-q^{n})^{2}}.
\]
\begin{itemize}
\item $\Theta(u,q)$ converges for all $u,q\in\mathbb{C}^{*}$ with $\left|q\right|<1$
and satisfies the functional equation
\[
\Theta(qu,q)=\Theta(u^{-1},q)=-u^{-1}\Theta(u,q).
\]
\item $\Theta(u,q)$ is related to the function $X(u,q)$ described in Theorem
\ref{thm2} by the formula
\[
X(u_{1},q)-X(u_{2},q)=-\frac{u_{2}\Theta(u_{1}u_{2},q)\Theta(u_{1}u_{2}^{-1},q)}{\Theta(u_{1},q)^{2}\Theta(u_{2},q)^{2}}.
\]
\end{itemize}
\end{thm}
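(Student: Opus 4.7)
The plan is to prove the three assertions of Theorem \ref{thm8} --- convergence, the functional equations, and the identity linking $\Theta$ to $X$ --- in order, with only the last being substantive. The convergence is immediate: each factor $(1-q^n u)(1-q^n u^{-1})(1-q^n)^{-2}$ differs from $1$ by $O(|q|^n)$ uniformly on compact subsets of $\mathbb{C}^* \times \{|q|<1\}$, and $\sum |q|^n$ converges geometrically, so $\Theta$ is holomorphic in both arguments with simple zeros exactly on $u \in q^{\mathbb{Z}}$.

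For the functional equations I would manipulate the product directly. The identity $\Theta(u^{-1},q) = -u^{-1}\Theta(u,q)$ follows because the tail product is symmetric under $u \leftrightarrow u^{-1}$ and $(1-u^{-1}) = -u^{-1}(1-u)$. For $\Theta(qu,q) = -u^{-1}\Theta(u,q)$ I would reindex: the substitution $u \mapsto qu$ shifts $(1-q^n u)$ to $(1-q^{n+1}u)$ and $(1-q^n u^{-1})$ to $(1-q^{n-1}u^{-1})$, so telescoping the two shifted products against the prefactor $(1-qu)$ and using $(1-u^{-1}) = -u^{-1}(1-u)$ produces exactly the claimed factor $-u^{-1}$.

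The substantive step is the identity for $X(u_1,q) - X(u_2,q)$, which I would prove by a divisor-comparison argument on the Tate curve. Fix $q$ and $u_2 \in \mathbb{C}^* \setminus q^{\mathbb{Z}}$, and view both sides as meromorphic functions of $u_1$. The functional equations imply that both sides are invariant under $u_1 \mapsto qu_1$, so each descends to a meromorphic function on $\mathbb{C}^*/q^{\mathbb{Z}}$. Both have simple zeros on $u_2^{\pm 1}q^{\mathbb{Z}}$ (because $X$ is even, so $X(u_1) = X(u_2)$ exactly on $u_2^{\pm 1}q^{\mathbb{Z}}$) and double poles on $q^{\mathbb{Z}}$ (from the $n=0$ term $u/(1-u)^2$ of $X$ and from the factor $\Theta(u_1,q)^2$ in the denominator respectively), so their ratio is a holomorphic function on the compact Riemann surface $\mathbb{C}^*/q^{\mathbb{Z}}$, hence constant. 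To identify the constant as $1$ I would expand both sides as $u_1 \to 1$: the leading term of the left side is $(1-u_1)^{-2}$, and the right side yields $-u_2 \cdot \Theta(u_2,q) \cdot \Theta(u_2^{-1},q) / [\Theta(u_2,q)^2 (1-u_1)^2]$, which by the functional equation $\Theta(u_2^{-1},q) = -u_2^{-1}\Theta(u_2,q)$ collapses to $(1-u_1)^{-2}$ as well.

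The main obstacle will be the careful bookkeeping of signs, prefactors, and the invariance check under $u_1 \mapsto qu_1$. The divisor agreement is essentially immediate once the functional equations are in hand, but the normalization constant $-u_2$ only drops out cleanly from the combination of the leading polar behaviour at $u_1 = 1$ and the specific transformation law $\Theta(u_2^{-1},q) = -u_2^{-1}\Theta(u_2,q)$, so the argument has to be assembled in exactly the right order.
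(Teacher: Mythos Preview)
The paper does not prove this statement at all: Theorem~\ref{thm8} is simply quoted from Silverman's book \cite[Page 412, Proposition 1.3]{MR1312368} and used as a black box in the corollary that follows. So there is no ``paper's own proof'' to compare against.

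That said, your argument is correct and is essentially the standard proof of this result. The convergence and the two functional equations are handled cleanly by direct manipulation of the product. For the $X$-identity, your divisor-comparison strategy on $\mathbb{C}^*/q^{\mathbb{Z}}$ is exactly right: the invariance of the right-hand side under $u_1\mapsto qu_1$ follows from the functional equation (the factors $-(u_1u_2)^{-1}$, $-u_1^{-1}u_2$, and $u_1^{-2}$ cancel), the zero and pole divisors match, and your leading-term computation at $u_1\to 1$ correctly pins down the constant via $\Theta(u_2^{-1},q)=-u_2^{-1}\Theta(u_2,q)$. One small caveat worth noting in a formal write-up: when $u_2$ is a $2$-torsion point the two zero loci $u_2 q^{\mathbb{Z}}$ and $u_2^{-1}q^{\mathbb{Z}}$ coincide and both sides acquire a double zero there rather than two simple zeros, but the identity for generic $u_2$ extends by continuity.
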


\begin{cor}
Suppose that $\{(r_{i},\theta_{i})\}_{i=1}^{4}\in\mathcal{R}_{4}$
gives a constant image. Write $u_{i}=\exp(2\pi\sqrt{-1}\theta_{i})q^{r_{i}}$,
then for $\left|q\right|<1$, we have the infinite product identity
\[
\frac{\Theta(u_{1}u_{2},q)\Theta(u_{1}u_{2}^{-1},q)\Theta(u_{3}u_{4},q)\Theta(u_{3}u_{4}^{-1},q)}{\Theta(u_{1}u_{4},q)\Theta(u_{1}u_{4}^{-1},q)\Theta(u_{3}u_{2},q)\Theta(u_{3}u_{2}^{-1},q)}=\textup{constant}.
\]
\end{cor}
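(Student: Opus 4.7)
The plan is to identify the theta function ratio in the corollary with the cross ratio of $\{X(u_i,q)\}_{i=1}^{4}$, and then to use the main theorem to conclude that this cross ratio is a constant function of $q$. For the identification, I would invoke Theorem \ref{thm2}: writing $q=\exp(2\pi\sqrt{-1}\tau)$ and choosing the basis of $E_q[n]$ corresponding to $\{\tau/n,1/n\}$ under the analytic uniformization $\phi:\mathbb{C}^{*}/q^{\mathbb{Z}}\to E_q(\mathbb{C})$, the torsion point $E[r_i,\theta_i]$ is exactly the image of $u_i=\exp(2\pi\sqrt{-1}\theta_i)q^{r_i}$. Consequently $\pi(E[r_i,\theta_i])=X(u_i,q)$, and the cross ratio of $\{\pi(E[r_i,\theta_i])\}_{i=1}^{4}$ equals the cross ratio of $\{X(u_i,q)\}_{i=1}^{4}$.

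Next I would promote the hypothesis ``constant image in $\mathfrak{S}_{4}\backslash(\mathbb{P}^{1})^{4}/\textup{PGL}_{2}$'' into the stronger assertion ``constant cross ratio for the fixed ordering $(u_1,u_2,u_3,u_4)$''. The $\mathfrak{S}_{3}$-quotient map $z\mapsto(z^{2}-z+1)^{3}/(z^{2}(z-1)^{2})$ has fibers of cardinality at most six, while the cross ratio is a meromorphic function of $q$ on the punctured disk $0<|q|<1$; a nonconstant meromorphic function cannot take values in a finite set, so the cross ratio must take a single value $\lambda\in\mathbb{P}^{1}$ for all admissible $q$.

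The final step is pure substitution. Plugging the identity of Theorem \ref{thm8},
\[
X(u_i,q)-X(u_j,q)=-\frac{u_j\,\Theta(u_iu_j,q)\,\Theta(u_iu_j^{-1},q)}{\Theta(u_i,q)^{2}\,\Theta(u_j,q)^{2}},
\]
into each of the four differences appearing in
\[
\frac{(X(u_1,q)-X(u_2,q))(X(u_3,q)-X(u_4,q))}{(X(u_1,q)-X(u_4,q))(X(u_3,q)-X(u_2,q))},
\]
one observes that the prefactors $-u_2$ and $-u_4$ each appear once in the numerator and once in the denominator and cancel; likewise each factor $\Theta(u_k,q)^{2}$ appears once on top and once on bottom and cancels. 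What remains is precisely the theta function ratio in the corollary, which must therefore equal $\lambda$. The only place requiring genuine care is the analytic-continuation argument of the second paragraph, and even there the reasoning is standard; everything else is routine cancellation.
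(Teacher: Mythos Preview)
Your proof is correct and follows essentially the same route as the paper: both simply substitute the identity from Theorem~\ref{thm8} into the cross ratio of the $X(u_i,q)$ and cancel. Your second paragraph, upgrading ``constant image in $\mathfrak{S}_{4}\backslash(\mathbb{P}^{1})^{4}/\textup{PGL}_{2}$'' to ``constant cross ratio for the fixed ordering'' via the finite-fiber/meromorphic argument, makes explicit a point the paper leaves tacit, but otherwise the arguments coincide.
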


\begin{proof}
By Theorem \ref{thm8}, the cross ratio of $\{X(u_{i},q)\}_{i=1}^{4}$
can be written as
\[
\frac{(X(u_{1},q)-X(u_{2},q))(X(u_{3},q)-X(u_{4},q))}{(X(u_{1},q)-X(u_{4},q))(X(u_{3},q)-X(u_{2},q))}=\frac{\Theta(u_{1}u_{2},q)\Theta(u_{1}u_{2}^{-1},q)\Theta(u_{3}u_{4},q)\Theta(u_{3}u_{4}^{-1},q)}{\Theta(u_{1}u_{4},q)\Theta(u_{1}u_{4}^{-1},q)\Theta(u_{3}u_{2},q)\Theta(u_{3}u_{2}^{-1},q)}.
\]
\end{proof}

\section{Further Discussion}

Let $S=\{(r_{i},\theta_{i})\}_{i=1}^{4}\in\mathcal{R}_{4}$ and $n$
their common order. Let
\[
\mathcal{H}^{*}=\{\tau\in\mathbb{C}:\textup{Im}(\tau)>0\}\cup\mathbb{P}^{1}(\mathbb{Q})
\]
 be the extended upper half plane, 
\[
\Gamma(n)=\left\{ \gamma\in\textup{SL}_{2}(\mathbb{Z}):\gamma\equiv\begin{pmatrix}1 & 0\\
0 & 1
\end{pmatrix}\textup{ mod }n\right\} 
\]
the principal congruence subgroup of level $n$, and $X(n)=\Gamma(n)\backslash\mathcal{H}^{*}$.
Let
\[
q=\exp(2\pi\sqrt{-1}\tau),u_{i}=\exp(2\pi\sqrt{-1}\theta_{i})q^{r_{i}},
\]
and $\mu_{S}$ the composition of
\begin{eqnarray*}
X(n) & \to & (\mathbb{P}^{1})^{4}\\
\tau & \mapsto & (X(u_{1},q),X(u_{2},q),X(u_{3},q),X(u_{4},q))
\end{eqnarray*}
and
\begin{eqnarray*}
(\mathbb{P}^{1})^{4}\;\xrightarrow{\textup{cross ratio}}\;(\mathbb{Z}/2\mathbb{Z})^{2}\backslash(\mathbb{P}^{1})^{4}/\textup{PGL}_{2} & \xrightarrow{\mathfrak{S}_{3}} & \mathfrak{S}_{4}\backslash(\mathbb{P}^{1})^{4}/\textup{PGL}_{2}\\
z & \mapsto & \frac{(z^{2}-z+1)^{3}}{z^{2}(z-1)^{2}}.
\end{eqnarray*}
Then $\mu_{S}$ is a meromorphic function from $X(n)$ to $\mathbb{P}^{1}$.
Let
\[
\Gamma_{S}=\{\gamma^{-T}\in\textup{SL}_{2}(\mathbb{Z}):\gamma\cdot S=S\}
\]
and
\[
\Delta_{S}=\{\gamma\in\textup{SL}_{2}(\mathbb{Z}):\mu_{S}(\gamma\cdot\tau)=\mu_{S}(\tau)\textup{ for any }\tau\in X(n)\}.
\]
By the proof of Corollary \ref{cor5}, $\Gamma(n)\subseteq\Gamma_{S}\subseteq\Delta_{S}\subseteq\textup{SL}_{2}(\mathbb{Z})$.
The map $\mu_{S}$ factors through $X(\Gamma_{S})=\Gamma_{S}\backslash\mathcal{H}^{*}$
and $X(\Delta_{S})=\Delta_{S}\backslash\mathcal{H}^{*}$. In summary,
we have the following commutative diagram:
\[
\xymatrix{X(n)\ar[d]\ar@{=}[r] & \Gamma(n)\backslash\mathcal{H}^{*}\ar[d]\ar[r]\ar@/^{0.25in}/[rrrr]\sp-{\mu_{S}} & (\mathbb{P}^{1})^{4}\ar[rr]\sp-{\textup{cross ratio}} &  & (\mathbb{Z}/2\mathbb{Z})^{2}\backslash(\mathbb{P}^{1})^{4}/\textup{PGL}_{2}\ar[r]\sp-{\mathfrak{S}_{3}} & \mathfrak{S}_{4}\backslash(\mathbb{P}^{1})^{4}/\textup{PGL}_{2}\\
X(\Gamma_{S})\ar[d]\ar@{=}[r] & \Gamma_{S}\backslash\mathcal{H}^{*}\ar[d]\ar[urrrr]\\
X(\Delta_{S})\ar[d]\ar@{=}[r] & \Delta_{S}\backslash\mathcal{H}^{*}\ar[d]\ar[uurrrr]\\
X(1)\ar@{=}[r] & \textup{SL}_{2}(\mathbb{Z})\backslash\mathcal{H}^{*}
}
\]
Theorem \ref{thm1} classifies all $S$ such that $\mu_{S}$ is a
constant map, thus gives a complete answer to \cite[Conjecture 3]{Hitchin}.
The next natural question is whether
\begin{conjecture}
\textup{\cite[Conjecture 4]{Hitchin}} $\Gamma_{S}=\Delta_{S}$ for
all but finitely many $S$, up to the $\textup{SL}_{2}(\mathbb{Z})$-equivalence
in $\mathcal{R}_{4}$.
\end{conjecture}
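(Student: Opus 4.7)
By the functional equation implicit in the proof of Corollary~\ref{cor5}, namely $\mu_{\gamma^{-T}\cdot S}(\tau) = \mu_S(\gamma^{-1}\tau)$ for every $\gamma \in \textup{SL}_{2}(\mathbb{Z})$, the condition $\gamma \in \Delta_S$ is equivalent to the identity $\mu_S = \mu_{\gamma^{-T}\cdot S}$ of meromorphic functions on $X(n)$, whereas $\gamma \in \Gamma_S$ is by definition the stronger combinatorial condition $\gamma^{-T}\cdot S = S$. The conjecture is therefore equivalent to the statement that, outside finitely many $\textup{SL}_{2}(\mathbb{Z})$-equivalence classes of $S$, the assignment $S' \mapsto \mu_{S'}$ is injective on the orbit $\textup{SL}_{2}(\mathbb{Z}) \cdot S$. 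The plan is to upgrade the $q$-expansion analysis underlying Lemma~\ref{lem4} into a rigidity theorem which reconstructs $S$ from the cuspidal jet of $\mu_S$.

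First, using Theorem~\ref{thm2} (or equivalently Theorem~\ref{thm8}) one writes the Laurent expansion
\[
\mu_S(\tau) = \sum_{k \ge m(S)} \alpha_k(S)\, q^{k/n}
\]
at the cusp $\tau = i\infty$ of $X(n)$. The leading exponent $m(S)$ is an explicit function of $\{r_i\}$ --- for a ``good'' $S$ ordered lexicographically one has $m(S) = -2n(r_3-r_2)$ --- and the coefficients $\alpha_k(S) \in \mathbb{Z}[\zeta_n]$ are specific polynomials in $\{\zeta_n^{n\theta_i}\}$. Since $\mu_S = \mu_{S'}$ iff their Laurent expansions agree term by term, detecting a non-trivial $\gamma \in \Delta_S \setminus \Gamma_S$ reduces to a coefficient-by-coefficient comparison between $\mu_S$ and $\mu_{\gamma^{-T}\cdot S}$.

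The rigidity claim to establish is that, for $n$ exceeding some effective threshold $n_0$, a bounded number of leading coefficients $\alpha_k(S)$ already determines $S$ within its orbit: the initial invariants pin down the multiset of radial gaps $\{r_i - r_j\}$, subsequent invariants recover the angular data $\{\theta_i\}$ up to finite ambiguity, and iterating breaks residual symmetries down to $\Gamma_S$. The heuristic is a dimension count --- $S$ carries only eight rational parameters whereas the jet of $\mu_S$ grows in arithmetic resolution with $n$ --- and for $n < n_0$ one falls back on the style of computer enumeration that powers Lemma~\ref{lem6}.

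The principal obstacle is making the rigidity effective. A coincidence $\alpha_k(S) = \alpha_k(S')$ amounts to a nontrivial identity among Laurent polynomials in roots of unity, and such identities can arise sporadically in small cyclotomic fields. The cleanest route I see is through the theta-product identity in the Corollary of Theorem~\ref{thm1}: $\mu_S = \mu_{S'}$ forces an equality of ratios of products of $\Theta(u,q)$, and since $\Theta(u,q)$ has only simple zeros along $u \in q^{\mathbb{Z}}$, comparing divisors in $\mathbb{C}^{*}/q^{\mathbb{Z}}$ produces a matching of the multisets $\{\pm u_i u_j^{\pm 1}\}$ between $S$ and $S'$. Upgrading this divisorial matching to an $\textup{SL}_{2}(\mathbb{Z})$-equivalence $S' = \gamma^{-T}\cdot S$, while ruling out coincidental matchings not coming from such an equivalence, is the technical heart of the conjecture, and it is precisely at this step that any sporadic small-$n$ exceptions must be isolated.
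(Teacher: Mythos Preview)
The statement you are attempting to prove is labeled as a \emph{Conjecture} in the paper, and the paper offers no proof of it. What follows the conjecture in the paper is not an argument for it but rather supporting discussion: an explicit example with $n=5$ where $\Gamma_{S}\neq\Delta_{S}$, and a short analysis of the prime-level case showing that $\Delta_{S}\neq\textup{SL}_{2}(\mathbb{Z})$ and constraining the index $[\Delta_{S}:\pm\Gamma(p)]$. There is therefore no proof in the paper to compare your proposal against.

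As for your proposal itself, it is an outline of a strategy rather than a proof, and you say so yourself: the passage ``Upgrading this divisorial matching to an $\textup{SL}_{2}(\mathbb{Z})$-equivalence \ldots\ is the technical heart of the conjecture'' concedes that the decisive step is missing. The reformulation via $\mu_{S}=\mu_{\gamma^{-T}\cdot S}$ is correct, and the idea of reading off invariants of $S$ from the cuspidal $q$-expansion is natural in light of Lemma~\ref{lem4}. But two substantive issues remain. First, the divisor argument through $\Theta$ does not immediately work: the cross ratio is a \emph{ratio} of theta products, so an equality $\mu_{S}=\mu_{S'}$ only says two such ratios agree, and the individual factors in numerator and denominator can share zeros and cancel, so one cannot simply match zero sets. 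Second, and more seriously, the infinite families (\ref{case1}) and (\ref{case2}) of Theorem~\ref{thm1} already give infinitely many $\textup{SL}_{2}(\mathbb{Z})$-classes with $\mu_{S}$ constant, hence $\Delta_{S}=\textup{SL}_{2}(\mathbb{Z})\neq\Gamma_{S}$; any correct formulation or proof must exclude these at the outset, and your sketch does not address this. So the proposal is at best a plausible program for a suitably amended statement, not a proof of the conjecture as written.
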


Now we give an example to show that sometimes $\Gamma_{S}\neq\Delta_{S}$
can indeed happen. Consider
\begin{eqnarray*}
S & = & \{(0,1/5),(0,2/5),(1/5,0),(2/5,0)\},\\
\begin{pmatrix}1 & 2\\
1 & 3
\end{pmatrix}\cdot S & = & \{(1/5,1/5),(1/5,4/5),(2/5,2/5),(2/5,3/5)\},\\
\begin{pmatrix}1 & 1\\
2 & 3
\end{pmatrix}\cdot S & = & \{(1/5,2/5),(1/5,3/5),(2/5,1/5),(2/5,4/5)\}.
\end{eqnarray*}
By the second part of Theorem \ref{thm8},
\begin{eqnarray*}
 &  & \frac{(X(u_{1},q)-X(u_{2},q))(X(u_{3},q)-X(u_{4},q))}{(X(u_{1},q)-X(u_{4},q))(X(u_{3},q)-X(u_{2},q))}\\
\\
 & = & \frac{\Theta(u_{1}u_{2},q)\Theta(u_{1}u_{2}^{-1},q)\Theta(u_{3}u_{4},q)\Theta(u_{3}u_{4}^{-1},q)}{\Theta(u_{1}u_{4},q)\Theta(u_{1}u_{4}^{-1},q)\Theta(u_{3}u_{2},q)\Theta(u_{3}u_{2}^{-1},q)}\\
\\
 & = & \begin{cases}
\dfrac{\Theta(\zeta_{5}^{3},q)\Theta(\zeta_{5}^{4},q)\Theta(q^{3/5},q)\Theta(q^{-1/5},q)}{\Theta(\zeta_{5}q^{2/5},q)\Theta(\zeta_{5}q^{-2/5},q)\Theta(\zeta_{5}^{2}q^{1/5},q)\Theta(\zeta_{5}^{3}q^{1/5},q)} & \textup{if }(u_{1},u_{2},u_{3},u_{4})=(\zeta_{5},\zeta_{5}^{2},q^{1/5},q^{2/5}),\\
\\
\dfrac{\Theta(q^{2/5},q)\Theta(\zeta_{5}^{2},q)\Theta(q^{4/5},q)\Theta(\zeta_{5}^{4},q)}{\Theta(\zeta_{5}^{4}q^{3/5},q)\Theta(\zeta_{5}^{3}q^{-1/5},q)\Theta(\zeta_{5}q^{3/5},q)\Theta(\zeta_{5}^{3}q^{1/5},q)} & \textup{if }(u_{1},u_{2},u_{3},u_{4})=(\zeta_{5}q^{1/5},\zeta_{5}^{4}q^{1/5},\zeta_{5}^{2}q^{2/5},\zeta_{5}^{3}q^{2/5}),\\
\\
\dfrac{\Theta(q^{2/5},q)\Theta(\zeta_{5}^{4},q)\Theta(q^{4/5},q)\Theta(\zeta_{5}^{2},q)}{\Theta(\zeta_{5}q^{3/5},q)\Theta(\zeta_{5}^{3}q^{-1/5},q)\Theta(\zeta_{5}^{4}q^{3/5},q)\Theta(\zeta_{5}^{3}q^{1/5},q)} & \textup{if }(u_{1},u_{2},u_{3},u_{4})=(\zeta_{5}^{2}q^{1/5},\zeta_{5}^{3}q^{1/5},\zeta_{5}q^{2/5},\zeta_{5}^{4}q^{2/5}),
\end{cases}
\end{eqnarray*}
where $\zeta_{5}=\exp(2\pi\sqrt{-1}/5)$. By the first part of Theorem
\ref{thm8}, these three expressions are equal for any $\left|q\right|<1$.
Actually, for this example, we have $\Gamma_{S}/\pm\Gamma(5)\cong(\mathbb{Z}/2\mathbb{Z})^{2}$
and $\Delta_{S}/\pm\Gamma(5)\cong A_{4}$. Moreover, we note that
$S$, $\begin{pmatrix}1 & 2\\
1 & 3
\end{pmatrix}\cdot S$, and $\begin{pmatrix}1 & 1\\
2 & 3
\end{pmatrix}\cdot S$ give a partition of the collection of all projective torsion points
of order $5$.

If $n=p\geq5$ is a prime, then by Lemma \ref{lem6}, $S$ is ``good''.
The calculations in the proof of Lemma \ref{lem4} imply that at some
cusp of $X(\Delta_{S})$, the $q$-expansion is not inside $\mathbb{C}((q))$.
Therefore, $\Delta_{S}\neq\textup{SL}_{2}(\mathbb{Z})$ and
\[
\Delta_{S}/\pm\Gamma(p)\subsetneq\textup{SL}_{2}(\mathbb{Z})/\pm\Gamma(p)\cong\textup{PSL}_{2}(\mathbb{Z}/p\mathbb{Z}).
\]
By \cite[Page 412, Theorem 6.25]{MR0648772}, any subgroup $H\subsetneq\textup{PSL}_{2}(\mathbb{Z}/p\mathbb{Z})$
contains at most one copy of $\mathbb{Z}/p\mathbb{Z}$. If $\mathbb{Z}/p\mathbb{Z}\subseteq H$,
then the normalizer $N_{\textup{PSL}_{2}(\mathbb{Z}/p\mathbb{Z})}(\mathbb{Z}/p\mathbb{Z})\supseteq H$
is a subgroup of order $p(p-1)/2$.
\begin{itemize}
\item If $p\nmid[\Delta_{S}:\pm\Gamma(p)]$, then the quotient map $X(p)\to X(\Delta_{S})$
is unramified at all cusps of $X(\Delta_{S})$.
\item If $p\mid[\Delta_{S}:\pm\Gamma(p)]$, then for some $\tau\in\mathbb{P}^{1}(\mathbb{Q})$,
the isotropy subgroup
\[
\Delta_{S,\tau}=\{\gamma\in\Delta_{S}:\gamma\cdot\tau=\tau\}
\]
satisfies
\[
\mathbb{Z}/p\mathbb{Z}\cong\Delta_{S,\tau}/(\Delta_{S,\tau}\cap\pm\Gamma(p))\subseteq\Delta_{S}/\pm\Gamma(p)\subsetneq\textup{SL}_{2}(\mathbb{Z})/\pm\Gamma(p)\cong\textup{PSL}_{2}(\mathbb{Z}/p\mathbb{Z}).
\]
The ramified cusps of $X(\Delta_{S})$ are corresponding to the left
cosets in
\[
N_{\textup{SL}_{2}(\mathbb{Z})/\pm\Gamma(p)}(\Delta_{S,\tau}/(\Delta_{S,\tau}\cap\pm\Gamma(p)))/(\Delta_{S}/\pm\Gamma(p)),
\]
whose size is
\[
\frac{p(p-1)/2}{[\Delta_{S}:\pm\Gamma(p)]}=\frac{[\textup{SL}_{2}(\mathbb{Z}):\Delta_{S}]}{p+1}.
\]
\end{itemize}
\textbf{Acknowledgments.} The first author was partially supported
by the HSE University Basic Research Program, Russian Academic Excellence
Project `5-100', and EPSRC programme grant EP/M024830. The second
author would like to express his gratitude for a pleasant stay at
Laboratory of Algebraic Geometry, HSE, where a substantial part of
this article was accomplished.

\addcontentsline{toc}{section}{References}

Fedor Bogomolov\\
Courant Institute of Mathematical Sciences, New York University\\
251 Mercer Street, New York, NY 10012, USA\\
Email: \href{mailto:bogomolo@cims.nyu.edu}{bogomolo@cims.nyu.edu}

\medskip{}

Fedor Bogomolov\\
Laboratory of Algebraic Geometry and its Applications\\
National Research University Higher School of Economics\\
6 Usacheva Street, 119048 Moscow, Russia

\medskip{}

Hang Fu\\
National Center for Theoretical Sciences, National Taiwan University,
Taipei, Taiwan\\
Email: \href{mailto:fu@ncts.ntu.edu.tw}{fu@ncts.ntu.edu.tw}
\end{document}